\sloppy\allowdisplaybreaks[4]
   \def\cA{{\cal A}}  \def\hu{\hat{u}}
     \def\hv{\hat{v}}
\def\dbC{\mathbb{C}}   \def\cC{{\cal C}}  
\def\dbE{\mathbb{E}}     
\def\dbF{\mathbb{F}} \def\sF{\mathscr{F}}    
\def\dbH{\mathbb{H}}
   \def\cK{{\cal K}}  \def\bX{\bar{X}}
     \def\bu{\bar{u}}
 \def\sM{\mathscr{M}}
\def\dbP{\mathbb{P}}   
   \def\cQ{{\cal Q}}
\def\dbR{\mathbb{R}}   \def\cR{{\cal R}}
\def\dbS{\mathbb{S}}   \def\cS{{\cal S}}
 \def\sT{\mathscr{T}}  
 \def\sU{\mathscr{U}}  
 \def\sV{\mathscr{V}}  
  \def\hX{\hat{X}}
\def\BA{{\bf A}}
\def\BB{{\bf B}}
\def\BC{{\bf C}}
\def\BD{{\bf D}}
\def\BG{{\bf G}}
\def\BJ{{\bf J}}
\def\BP{{\bf P}}
\def\BQ{{\bf Q}}
\def\BR{{\bf R}}
\def\BS{{\bf S}}
\def\BV{{\bf V}}
\def\BX{{\bf X}}
\def\BY{{\bf Y}}
\def\BZ{{\bf Z}}
\def\Bb{{\bf b}}
\def\Bg{{\bf g}}
\def\Bq{{\bf q}}
\def\Br{{\bf r}}
\def\Bu{{\bf u}}
\def\BTh{\boldsymbol\Theta}
\def\BBsi{\boldsymbol\sigma}
\def\BBf{\boldsymbol\varphi}
\def\ns{\noalign{\ss}}
\def\ds{\displaystyle}
\def\ss{\smallskip}   \def\lt{\left}          \def\hb{\hbox}
\def\ms{\medskip}     \def\rt{\right}         \def\ae{\text{a.e.}}
\def\h{\widehat}          \def\lan{\langle}       \def\les{\leqslant}
\def\ges{\geqslant}
\def\as{\text{a.s.}}
\def\q{\quad}         \def\ran{\rangle}       \def\tr{\hb{tr$\,$}}
\def\qq{\qquad}             
\def\no{\noindent}          
\def\hp{\hphantom}    \def\blan{\big\lan}     \def\sc{\scriptscriptstyle}
\def\nn{\nonumber}    \def\bran{\big\ran}     \def\scT{\sc T}
\def\rf{\eqref}       \def\Blan{\Big\lan\!\!} \def\ds{\displaystyle}
\def\cd{\cdot}        \def\Bran{\!\!\Big\ran} 
\def\deq{\triangleq}  \def\({\Big(}           
       \def\){\Big)}           \def\tb{\textcolor{blue}}
   \def\[{\Big[}           
  \def\]{\Big]}
\def\({\Big (}
\def\){\Big )}
\def\[{\Big[}
\def\]{\Big]}
\def\3n{\negthinspace \negthinspace \negthinspace }
\def\2n{\negthinspace \negthinspace }
\def\1n{\negthinspace }
\def\bel{\begin{equation}\label}
\def\ee{\end{equation}}
\def\bea{\begin{eqnarray}}
\def\eea{\end{eqnarray}}
\def\bt{\begin{theorem}\label}
\def\et{\end{theorem}}
\def\bc{\begin{corollary}\label}
\def\ec{\end{corollary}}
\def\bex{\begin{example}\label}
\def\ex{\end{example}}
\def\bl{\begin{lemma}\label}
\def\el{\end{lemma}}
\def\bp{\begin{proposition}\label}
\def\ep{\end{proposition}}
\def\br{\begin{remark}\label}
\def\er{\end{remark}}
\def\ba{\begin{array}}
\def\ea{\end{array}}
\def\bde{\begin{definition}\label}
\def\ede{\end{definition}}
\def\a{\alpha}       \def\l{\lambda}    
\def\b{\beta}               \def\F{\varPhi}
\def\d{\delta}           
\def\f{\varphi}             \def\Om{\varOmega}
       \def\si{\sigma}    \def\Si{\varSigma}
\def\i{\infty}             \def\Th{\varTheta}
\newtheoremstyle{thry}
{}      
{}      
{\sl}   
{}      
{\bf}   
{.}     
{.5em}  
{}      
\theoremstyle{thry}
\newtheorem{theorem}{Theorem}[section]
\newtheorem{proposition}[theorem]{Proposition}
\newtheorem{corollary}[theorem]{Corollary}
\newtheorem{lemma}[theorem]{Lemma}
\theoremstyle{definition}
\newtheorem{definition}[theorem]{Definition}
\newtheorem{example}[theorem]{Example}
\theoremstyle{remark}
\newtheorem{remark}[theorem]{Remark}
\def\punct{}
\newtheoremstyle{dotless}{}{}{\rm}{}{\bf}{\punct}{.5em}{}
\theoremstyle{dotless}
\newenvironment{taggedthm}[1]
 {\taggedthmx}
 {\endtaggedthmx}
\newenvironment{taggedassumption}[1]
 {\taggedassumptionx}
 {\endtaggedassumptionx}
   \newcommand{\setword}[2]{%
   \phantomsection
   #1\def\@currentlabel{\unexpanded{#1}}\label{#2}%
   }
\begin{document}

\title{\bf Turnpike Properties for Stochastic Linear-Quadratic Optimal Control Problems}

\author{Jingrui Sun\thanks{Department of Mathematics, Southern University of Science and Technology, Shenzhen,
                           518055, China (Email: {\tt sunjr@sustech.edu.cn}).
                           This author is supported by NSFC grant 11901280 and Guangdong Basic and Applied Basic
                           Research Foundation 2021A1515010031.}
~~~
Hanxiao Wang\thanks{Department of Mathematics, National University of Singapore, Singapore 119076, Singapore
                    (Email: {\tt hxwang14@fudan.edu.cn}).
                    This author is supported by Singapore MOE AcRF Grant R-146-000-271-112.}
~~~
Jiongmin Yong\thanks{Department of Mathematics, University of Central Florida, Orlando, FL 32816, USA
                    (Email: {\tt jiongmin.yong@ucf.edu}).
                    This author is supported by NSF grant DMS-1812921.}
}

\maketitle

\vskip-0.5cm

\centerline{({\sl In the Memory of Professor Chaohao Gu})}

\ms

\ms

\no{\bf Abstract.}
This paper analyzes the limiting behavior of stochastic linear-quadratic optimal control problems in finite time horizon $[0,T]$ as $T\to\infty$. The so-called turnpike properties are established for such problems, under stabilizability condition which is weaker than the controllability, normally imposed in the similar problem for ordinary differential systems. In dealing with the turnpike problem, a crucial issue is to determine the corresponding static optimization problem.
Intuitively mimicking deterministic situations, it seems to be natural to include both the drift and the diffusion as constraints in the static optimization problem. However, this would lead us to a wrong direction. It is found that the correct static problem should contain the diffusion as a part of the objective function, which reveals a deep feature of the stochastic turnpike problem.

\ms
\no{\bf Keywords.}
Turnpike property, stochastic optimal control, static optimization, linear-quadratic,
stabilizability, Riccati equation.

\ms
\no{\bf AMS 2020 Mathematics Subject Classification.}  49N10, 93D23, 93E15, 93E20.

\section{Introduction}\label{Sec:Intro}

Let $(\Om,\sF,\dbP)$ be a complete probability space on which a standard one-dimensional Brownian
motion $W=\{W(t)\bigm|t\ges0\}$ is defined.
Denote by $\dbF=\{\sF_t\}_{t\ges0}$ the usual augmentation of the natural filtration generated by $W$.
For a random variable $\xi$, we write $\xi\in\sF_t$ if $\xi$ is $\sF_t$-measurable;
and for a stochastic process $X$, we write $X\in\dbF$ if it is progressively measurable with respect
to the filtration $\dbF$.

\ms

Consider the following controlled linear stochastic differential equation (SDE, for short)
\bel{state}\left\{\2n\ba{ll}
\ds dX(t)=[AX(t)+Bu(t)+b]dt+[CX(t)+Du(t)+\si]dW(t),\q t\ges0,\\
\ns\ds X(0)=x,\ea\right.\ee
and the following general quadratic cost functional
\bel{TP:cost}\ba{ll}
\ns\ds J_{\scT}(x;u(\cd))={1\over2}\dbE\int_0^T\[\lan QX(t),X(t)\ran+2\lan SX(t),u(t)\ran+\lan Ru(t),u(t)\ran\\
\ns\ds\qq\qq\qq\qq\qq+2\lan q,X(t)\ran+2\lan r,u(t)\ran\]dt,\ea\ee
where $A,C\in\dbR^{n\times n}$, $B,D\in\dbR^{n\times m}$, $Q\in\dbS^n$, $S\in\dbR^{m\times n}$, $R\in\dbS^m$, $b,\si,q\in\dbR^n$, and $r\in\dbR^m$ are constant matrices or vectors with $\dbS^k$ being the set of all $(k\times k)$ symmetric matrices. The classical {\it stochastic linear-quadratic {\rm(LQ, for short)} optimal control problem} over the finite time-horizon $[0,T]$ is to find a control $\bu_{\scT}(\cd)$ from the space
\begin{align}\label{def:cUT}
\sU[0,T] = \lt\{ u:[0,T]\times\Om\to\dbR^m \bigm| u\in\dbF~\text{and}~\dbE\int_0^T|u(t)|^2dt<\i \rt\}
\end{align}
such that the cost functional \rf{TP:cost} is minimized over $\sU[0,T]$, for a given initial state $x\in\dbR^n$. More precisely, it can be stated as follows.

\begin{taggedthm}{Problem (SLQ)$_{\scT}$.}
For any given initial state $x\in\dbR^n$, find a control $\bu_{\scT}(\cd)\in\sU[0,T]$ such that
\begin{align}\label{TP:opt-u}
J_{\scT}(x;\bu_{\scT}(\cd)) = \inf_{u(\cd)\in\sU[0,T]} J_{\scT}(x;u(\cd)) \equiv V_{\scT}(x).
\end{align}
\end{taggedthm}

The process $\bu_{\scT}(\cd)$ in \rf{TP:opt-u} (if exists) is called an {\it open-loop optimal control}
of Problem (SLQ)$_{\scT}$ for the initial state $x$,
the corresponding state process $\bX_{\scT}(\cd)$ is called an {\it open-loop optimal state process},
$(\bX_{\scT}(\cd),\bu_{\scT}(\cd))$ is called an {\it open-loop optimal pair}, and $V_{\scT}(\cd)$ is called the {\it value function} of Problem (SLQ)$_{\scT}$.

\ms

In this paper, we are concerned with the limiting behavior of the optimal pair $(\bX_{\scT}(\cd)$, $\bu_{\scT}(\cd))$ of Problem (SLQ)$_{\scT}$ as the time-horizon $T$ tends to infinity. More precisely, we want to seek conditions under which there exist positive constants $K,\mu>0$, independent of $T$, such that for some $(x^*,u^*)\in\dbR^n\times\dbR^m$, it holds
\bel{Unibound:EXEu}\big|\dbE[\bX_{\scT}(t)-x^*]\big|+
\big|\dbE[\bu_{\scT}(t)-u^*]\big| \les K\big[e^{-\mu t} + e^{-\mu(T-t)}\big], \q\forall t\in[0,T].\ee
This is referred to as the {\it exponential turnpike property} of Problem (SLQ)$_{\scT}$. Such a property implies that for any small $\d\in(0,1/2)$, the following is true:
\bel{d,t-d}\big|\dbE[\bX_{\scT}(t)-x^*]\big|+\big|\dbE[\bu_{\scT}(t)
-u^*]\big|\les2Ke^{-\mu\d T},\qq\forall t\in[\d T,(1-\d)T].\ee
Namely, in a big portion $[\d T,(1-\d)T]$ of $[0,T]$, the optimal pair $(\bar X_{\scT}(\cd),\bar u_{\scT}(\cd))$ is exponentially close to the point $(x^*,u^*)$. This will give us the essential picture of the optimal pair without having to solve it analytically, which is very useful in applications.

\ms

The study of turnpike phenomena for deterministic problems can be traced back to the work of von Neumann \cite{Neumann1945} on problems in economics. In 1958, Dorfman, Samuelson, and Solow (\cite{Dorfman-Samuelson-Solow1958}) coined the name ``turnpike'' which was used in the highway system of the United States. Since then the turnpike phenomena have attracted considerable attentions, not only in mathematical economy \cite{McKenzie1976}, but also in many other fields such as mathematical biology \cite{Ibane2017} and chemical processes \cite{Rawlings-Amrit2009}. It is well-known by now that the turnpike property is a general phenomenon which holds for a large class of variational and optimal control problems. Numerous relevant results have been established for finite and infinite dimensional problems in the context of deterministic discrete-time and continuous-time systems (see, e.g., \cite{Carlson-Haurie-Leizarowitz1991,Zaslavski2006,Zaslavski2011,Trelat-Zuazua2015,Zuazua2017,Trelat-Zhang2018,
Lou-Wang2019,Zaslavski2019,Breiten-Pfeiffer2020,Grune-Guglielmi2021} and the references therein). In particular, we mention the papers \cite{Damm-Grune-Stieler-Worthmann2014,Grune-Guglielmi2018} for discrete-time LQ problems and the papers \cite{Porretta-Zuazua2013,Porretta-Zuazua2016} for continuous-time LQ problems of ordinary differential equations.

\ms

The study of turnpike phenomena for stochastic optimal control problems is quite lacking in literature. In this paper, we shall carry out a thorough investigation on the turnpike property for the stochastic LQ optimal control problem introduced earlier.
Note that when $C=0$ and $D=0$, Problem (SLQ)$_{_T}$ reduces to a deterministic LQ problem, for which the exponential turnpike property has been established in \cite{Porretta-Zuazua2013}
and \cite{Trelat-Zuazua2015} under controllability and observability assumptions. For the deterministic LQ problem (i.e., the case of $C=0,D=0,\si=0$), the associated static optimization problem, which is used to determine the point $(x^*,u^*)$, reads
\bel{DLQ:static}\left\{\2n\ba{ll}
\ds\text{Minimize}~F_0(x,u)\1n\equiv\1n\lan Qx,x\ran\1n +\1n2\lan Sx,u\ran\1n+\1n\lan Ru,u\ran+2\lan q,x\ran+2\lan r,u\ran,\\
\ds\text{subject to}\q Ax+Bu+b=0.\ea\right.\ee
To establish the turnpike property for the stochastic LQ problem, suggested by the deterministic situation, one might naively introduce the following static optimization problem:
\bel{Fake}\left\{\2n\ba{ll}
\ds \text{Minimize}\q F_0(x,u),\\
\ds \text{subject to}\q Ax+Bu+b=0,\q Cx+Du+\si=0.\ea\right.\ee
Assume the above admits an optima solution $(x^*,u^*)$. Then one tries to show that the optimal pair $(\bar X_{\scT}(\cd),\bar u_{\scT}(\cd))$ of Problem (SLQ)$_{\scT}$ satisfies \rf{Unibound:EXEu}. However, a little careful observation of the above, one immediately realize it is not natural because the condition that ensuring such an optimization problem to be feasible is already very restrictive: The two equality constraints might be contracting each other. It turns out that \rf{Fake} is not the correct one, which will be shown later in this paper.
As a main contribution of this paper, we found that the correct formulation of the static optimization problem is as follows:
\bel{static}\left\{\2n\ba{ll}
\ds\hb{Minimize}\q F(x,u)\equiv F_0(x,u)+\lan P(Cx+Du+\si),Cx+Du+\si\ran,\\
\ns\ds\hb{subject to }\q Ax+Bu+b=0,\ea\right.\ee
where $P$ is a positive definite solution to the following algebraic Riccati equation (ARE, for short):
\bel{ARE}\ba{ll}
\ds PA+A^\top P+C^\top PC+Q\\
\ds\q-(PB+C^\top PD+S^\top)(R+D^\top PD)^{-1}(B^\top P+D^\top PC+S)=0.\ea\ee
By assuming the controlled homogenous state equation (denoted by $[A,C;B,D]$) to be {\it stabilizable} (see the next section for a precise definition) and the following {\it strong standard condition}:
\bel{R>0}R>0,\q Q-S^\top R^{-1}S>0,\ee
one will have a unique suitable positive definite solution $P$ to the above ARE \rf{ARE}, and problem \rf{static} is not only feasible, but also admits a unique solution $(x^*,u^*)$. We will show that there exist positive constants $K,\mu>0$, independent of $T$, such that \rf{Unibound:EXEu} holds, and the adjoint process $\bar Y_{\scT}(\cd)$ will also have the same turnpike property. Note that by a (classical) standard condition, we mean that $Q-S^\top R^{-1}S$ is merely positive semi-definite, which could even be 0. In such cases, $P$ might not be positive definite, and $(x,u)\mapsto F(x,u)$ might not be coercive. Therefore, it is unclear if the optimal solution $(x^*,u^*)$ exists, or it might not be unique. This might bring some additional issues into the study and we will try to address that in our future publications. Also, we note that for the state equation, stabilizability is strictly weaker than the (null) controllability which was assumed in \cite{Porretta-Zuazua2013,
Trelat-Zuazua2015,Lou-Wang2019} for deterministic problems. For the study of controllability of linear SDEs, see \cite{Wang-Yang-Yong-Yu2017}.

\ms

The rest of the paper is organized as follows. In \autoref{Sec:Pre}, we give the preliminaries and collect some relevant results on stochastic LQ optimal control problems.
In \autoref{Sec:Stability} we recall the notion of stabilizability and formulate the correct static optimization problem. The convergence of the solution to a related differential Riccati equation as the time-horizon tends to infinity will be presented in \autoref{Sec:Convergence}. In \autoref{Sec:TP}, we study the static optimization problem associated to Problem (SLQ)$_{\scT}$ and establish the turnpike property of Problem (SLQ)$_{\scT}$ as well as of the adjoint process. Some concluding remarks are collected in \autoref{Sec:Concludng}.

\section{Preliminaries}\label{Sec:Pre}

We begin with some notation that will be frequently used in the sequel.
Let $\dbR^{n\times m}$ be the space of $n\times m$ real matrices equipped with the Frobenius inner product
$$ \lan M,N\ran=\tr(M^\top N), \q M,N\in\dbR^{n\times m}, $$
where $M^\top$ denotes the transpose of $M$ and $\tr(M^\top N)$ is the trace of $M^\top N$.
The norm induced by the Frobenius inner product is denoted by $|\cd|$.
For a subset $\dbH$ of $\dbR^{n\times m}$, we denote by $C([0,T];\dbH)$ the space of continuous functions
from $[0,T]$ into $\dbH$, and by $L^\i(0,T;\dbH)$ the space of Lebesgue measurable, essentially bounded
functions from $[0,T]$ into $\dbH$.
Let $\dbS^n$ be the subspace of $\dbR^{n\times n}$ consisting of symmetric matrices and $\dbS^n_+$ the
subset of $\dbS^n$ consisting of positive definite matrices.
For $\dbS^n$-valued functions $M(\cd)$ and $N(\cd)$, we write $M(\cd)\ges N(\cd)$ (respectively, $M(\cd)>N(\cd)$)
if $M(\cd)-N(\cd)$ is positive semidefinite (respectively, positive definite) almost everywhere with respect
to the Lebesgue measure.
The identity matrix of size $n$ is denoted by $I_n$, and a vector always refers to a column vector if not specified.
Also, recall that $W=\{W(t);\,t\ges0\}$ is a standard one-dimensional Brownian motion, $\dbF=\{\sF_t\}_{t\ges0}$
is the usual augmentation of the natural filtration generated by $W$, and that $\sU[0,T]$ is the space of
$\dbR^m$-valued, $\dbF$-progressively measurable, square-integrable processes over $[0,T]$.

\ms

For the purpose of later presentation, we recall some results of time-variant stochastic LQ problem in finite horizon. Consider the state equation
\bel{state:SLQ}\left\{\2n\ba{ll}
\ns\ds d\BX(t)=[\BA(t)\BX(t)+\BB(t)\Bu(t)+\Bb(t)]dt+ [\BC(t)\BX(t)+\BD(t)\Bu(t)+\BBsi(t)]dW(t), \\
\ns\ds\qq\qq\qq\qq\qq\qq\qq\qq\qq\qq\qq\qq\qq\qq t\in[0,T],\\
\ns\ds\BX(0)=x,\ea\right.\ee
with the cost functional
\bel{cost:SLQ}\ba{ll}
\ns\ds\BJ(x;\Bu(\cd))={1\over2}\dbE\Big\{\lan\BG\BX(T),\BX(T)\ran+2\lan \Bg,\BX(T)\ran\\
\ns\ds\qq\qq+\int_0^T\[
\Blan\begin{pmatrix}\BQ(t)&\BS(t)^\top\\ \BS(t)&\BR(t)\end{pmatrix}\!
     \begin{pmatrix}\BX(t) \\ \Bu(t)\end{pmatrix}\!,
     \begin{pmatrix}\BX(t) \\ \Bu(t) \end{pmatrix}\Bran
+2\Blan\begin{pmatrix}\Bq(t) \\ \Br(t) \end{pmatrix}\!,
       \begin{pmatrix}\BX(t) \\ \Bu(t) \end{pmatrix}\Bran\] dt\Big\},\ea\ee
where in \rf{state:SLQ}, the coefficients satisfy
$$\BA(\cd),\BC(\cd)\1n\in\1n L^\i(0,T;\dbR^{n\times n}),
\q\BB(\cd),\BD(\cd)\1n\in\1n L^\i(0,T;\dbR^{n\times m}),
\q\Bb(\cd),\BBsi(\cd)\1n\in\1n L^\i(0,T;\dbR^n),$$
and in \rf{cost:SLQ}, the weighting coefficients satisfy
$$\ba{ll}
\ns\ds\BG\in\dbS^n,\q\BQ(\cd)\in L^\i(0,T;\dbS^n),\q\BS(\cd)\in L^\i(0,T;\dbR^{m\times n}),\q\BR(\cd)\in L^\i(0,T;\dbS^m),\\
\ns\ds\Bg\in\dbR^n,\q\Bq(\cd)\in L^\i(0,T;\dbR^n),\q\Br(\cd)\in L^\i(0,T;\dbR^m).\ea$$
The standard stochastic LQ optimal control problem on $[0,T]$ can be stated as follows.

\begin{taggedthm}{Problem (SLQ$_{[0,T]}$).}
For a given initial state $x\in\dbR^n$, find a control $\bar\Bu(\cd)\in\sU[0,T]$ such that
\bel{SLQ:def-v*}\BJ(x;\bar\Bu(\cd))=\inf_{\Bu(\cd)\in\sU[0,T]}
\BJ(x;\Bu(\cd))\equiv\BV(x).\ee
\end{taggedthm}

The process $\bar\Bu(\cd)$ (if exists) in \rf{SLQ:def-v*} is called an ({\it open-loop}) {\it optimal control} for the initial state $x$, and $\BV(x)$ is called the {\it value} of Problem (SLQ$_{[0,T]}$) at $x$.

\ms

The following lemma summarizes a few results for Problem (SLQ).
For proofs, the reader is referred to the book \cite{Sun-Yong2020} by Sun and Yong;
see also \cite{Sun-Li-Yong2016}.

\begin{lemma}\label{lmm:opti-sys}
Suppose that for some constant $\d>0$,
\bel{BR>0}\BR(t)\ges\d I,\qq\BQ(t)-\BS(t)^\top\BR(t)^{-1}\BS(t)\ges0.\ee
Then, the following hold:

\ms

{\rm(i)} For every initial state $x\in\dbR^n$, Problem {\rm(SLQ)$_{[0,T]}$} has a unique open-loop optimal control.

\ms

{\rm(ii)} A pair $(\bar\BX(\cd),\bar\Bu(\cd))$ is an open-loop optimal pair of Problem {\rm(SLQ)$_{_T}$} the initial state $x$ if and only if there exists a pair $(\bar\BY(\cd),\bar\BZ(\cd))$ of adapted processes such that
\bel{SLQ:opti-sys}\left\{\2n\ba{ll}
\ns\ds d\bar\BX=\big[\BA(t)\bar\BX(t)+\BB(t)\bar\Bu(t)+\Bb(t)\big]dt
+\big[\BC(t)\bar\BX(t)+\BD(t)\bar\Bu(t)+\BBsi(t)\big]dW, \\
\ns\ds d\bar\BY(t)=-\big[\BA(t)^\top\bar\BY(t)+\BC(t)^\top\bar\BZ(t)+\BQ(t)
\bar\BX(t)+\BS(t)^\top\bar\Bu(t)+\Bq(t)\big]dt+\bar\BZ(t)dW(t),\\
\ns\ds\bar\BX(0)=x,\qq\bar\BY(T)=\BG\bar\BX(T)+\Bg,\ea\right.\ee
and the following condition holds:
\bel{SLQ:sta-cdtn}\BB(t)^\top\bar\BY(t)+\BD(t)^\top\bar\BZ(t)+\BS(t)
\bar\BX(t)+\BR(t)\bar\Bu(t)+\Br(t)=0,\q\ae\; t\in[0,T],~\as\ee

\ms

{\rm(iii)} The Riccati differential equation
\bel{Ric:SLQ}\left\{\2n\ba{ll}
\ns\ds\dot\BP(t)+\BP(t)\BA(t)+\BA(t)^\top\BP(t)+\BC(t)^\top \BP(t)\BC(t)+\BQ(t)\\
\ns\ds\qq-\big[\BP(t)\BB(t)+\BC(t)^\top\BP(t)\BD(t)+\BS(t)^\top\big]
\big[\BR(t)+\BD(t)^\top\BP(t)\BD(t)\big]^{-1}\\
\ns\ds\qq\qq\cd\big[\BB(t)^\top \BP(t)+\BD(t)^\top\BP(t)\BC(t)+\BS(t)\big]=0, \\
\ns\ds\BP(T)=\BG\ea\right.\ee
admits a unique positive semidefinite solution $\BP(\cd)\in C([0,T];\dbS^n)$. In particular, if
$$\BQ(t)-\BS(t)^\top\BR(t)^{-1}\BS(t)>0,$$
then $\BP(t)>0$ for all $t\in[0,T)$.

\ms

{\rm(iv)} The unique open-loop optimal control $\bar\Bu(\cd)$ for the initial state $x$ is given by
$$\bar\Bu(t)=\BTh(t)\bar\BX(t)-\big[\BR(t)+\BD(t)^\top \BP(t)\BD(t)\big]^{-1}\big[\BB(t)^\top\BBf(t)+\BD(t)^\top \BP(t)\BBsi(t)+\Br(t)\big],$$
where
$$\BTh(t)=-\big[\BR(t)+\BD(t)^\top \BP(t)\BD(t)\big]^{-1}\big[\BB(t)^\top\BP(t)+\BD(t)^\top\BP(t)\BC(t)
+\BS(t)\big],$$
and $\BBf(\cd)$ is the solution to the terminal value problem of the ordinary differential equation (ODE, for short)
\bel{SLQ:ODE-f}\left\{\2n\ba{ll}
\ns\ds\dot\BBf(t)+\big[\BA(t)+\BB(t)\BTh(t)\big]^\top\BBf(t)
+\big[\BC(t)+\BD(t)\BTh(t)\big]^\top\BP(t)\BBsi(t)\\
\ns\ds\qq\qq\qq\qq+\BTh(t)^\top\Br(t)+\BP(t)\Bb(t)+\Bq(t)=0,\qq t\in[0,T],\\
\ns\ds\BBf(T)=\Bg.\ea\right.\ee

\ms

{\rm(v)} The value function is given by
$$\ba{ll}
\ds\BV(x)={1\over2}\lan\BP(0)x,x\ran+\lan\BBf(0),x\ran+{1\over2}\int_0^T\(\lan \BP(t)\BBsi(t),\BBsi(t)\ran+2\lan\BBf(t),\Bb(t)\ran\\
\ns\ds\qq\qq-\big|\big[\BR(t)+\BD(t)^\top \BP(t)\BD(t)\big]^{-{1\over2}}\big[\BB(t)^\top\BBf(t)+\BD(t)^\top \BP(t)\BBsi(t)+\Br(t)\big]\big|^2\)dt.\ea$$

\end{lemma}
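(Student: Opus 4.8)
The plan is to base everything on the uniform convexity of the cost functional in the control variable, and then to read off the optimality system, the Riccati equation, and the feedback representation by the classical variation-and-completion-of-squares scheme. For (i), expressing $\BX(\cd)$ as an affine functional of $\Bu(\cd)$ through the state equation \rf{state:SLQ} turns $\Bu(\cd)\mapsto\BJ(x;\Bu(\cd))$ into a quadratic functional on the Hilbert space $\sU[0,T]$. The condition \rf{BR>0} makes its leading quadratic part uniformly positive, so $\BJ(x;\cd)$ is coercive and strictly convex; existence and uniqueness of the minimizer then follow from the direct method (equivalently, from Lax--Milgram).

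For (ii) I would compute the G\^ateaux derivative of $\BJ(x;\cd)$ in an arbitrary direction $\Bv(\cd)\in\sU[0,T]$, introducing the adjoint pair $(\bar\BY(\cd),\bar\BZ(\cd))$ as the solution of the backward SDE in \rf{SLQ:opti-sys} precisely so that, after an application of It\^o's formula, the boundary terms cancel and the derivative reduces to the pairing of $\Bv(\cd)$ against the left-hand side of \rf{SLQ:sta-cdtn}. The first-order condition $D\BJ(x;\bar\Bu)(\Bv)=0$ for all $\Bv$ then gives \rf{SLQ:sta-cdtn}; necessity is immediate, and sufficiency is guaranteed by the convexity already established.

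The hard part is (iii), the global solvability of the Riccati equation \rf{Ric:SLQ}. Its right-hand side is locally Lipschitz in $\BP$, so a unique local solution exists near $t=T$; the real issue is to preclude finite-time blow-up. I would show that along the maximal solution the matrix $\BR(t)+\BD(t)^\top\BP(t)\BD(t)\ges\d I$ stays uniformly invertible (which uses $\BP\ges0$, keeping the inverse in \rf{Ric:SLQ} meaningful), and then obtain a uniform two-sided bound $0\les\BP(t)\les CI$ by identifying ${1\over2}\lan\BP(t)x,x\ran$ with the value of the homogeneous LQ problem on $[t,T]$: the lower bound is the nonnegativity of that cost, and the upper bound follows by testing with a fixed control and using the convexity estimate from (i). Such a bound rules out blow-up, yielding global existence; strict positivity $\BP(t)>0$ for $t<T$ under $\BQ-\BS^\top\BR^{-1}\BS>0$ follows because the running cost then dominates a positive multiple of $|\BX|^2$.

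Finally, for (iv) and (v) I would apply It\^o's formula to $t\mapsto\lan\BP(t)\bar\BX(t),\bar\BX(t)\ran+2\lan\BBf(t),\bar\BX(t)\ran$, substitute \rf{Ric:SLQ} and the equation \rf{SLQ:ODE-f} for $\BBf(\cd)$, and complete the square in the control. The terms are arranged so that $\BJ(x;\Bu)$ becomes the sum of ${1\over2}\lan\BP(0)x,x\ran+\lan\BBf(0),x\ran$, the deterministic integral appearing in (v), and a nonnegative term of the form $\big|[\BR(t)+\BD(t)^\top\BP(t)\BD(t)]^{1/2}(\Bu(t)-\bar\Bu(t))\big|^2$ integrated in $t$, where $\bar\Bu(\cd)$ is exactly the feedback expression of (iv). Thus $\BJ(x;\Bu)$ is minimized precisely at $\Bu=\bar\Bu$, proving (iv), and the resulting minimal value is the value function formula of (v).
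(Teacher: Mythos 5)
This lemma is stated in the paper without proof---it is quoted from the cited book of Sun and Yong (2020)---and your outline reproduces essentially the standard argument given there: uniform convexity of $\Bu\mapsto\BJ(x;\Bu)$ under \rf{BR>0} for (i), the convex-variational/BSDE duality computation for (ii), local Lipschitz existence plus the value-function identification $\tfrac12\lan\BP(t)x,x\ran=\BV^0_{[t,T]}(x)$ giving the a priori bound $0\les\BP(t)\les CI$ that excludes blow-up for (iii), and completion of squares for (iv)--(v). The only step worth spelling out is the coercivity claim in (i): the hypothesis \rf{BR>0} yields $\BJ^0(0;\Bu)\ges\tfrac{\d}{2}\,\dbE\int_0^T|\Bu(t)+\BR(t)^{-1}\BS(t)\BX(t)|^2dt$, and this is converted into a bound $c\,\dbE\int_0^T|\Bu(t)|^2dt$ by noting that $\Bu\mapsto\Bu+\BR^{-1}\BS\BX(\cd\,;0,\Bu)$ is a bounded linear bijection of $\sU[0,T]$ (its inverse solves the closed-loop state equation), after which your direct-method argument goes through.
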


Now, we return to our Problem (SLQ)$_{_T}$. Let us make a reduction under the strong standard conditions \rf{R>0}. Set
\bel{u=v}u(t)=v(t)-R^{-1}SX(t),\qq\in[0,T],\ee
with $v(\cd)\in\sU[0,T]$. We observe the following ($t$ is suppressed)
$$\ba{ll}
\ns\ds\lan QX,X\ran+2\lan SX,u\ran+\lan Ru,u\ran+2\lan q,X\ran+2\lan r,u\ran\\
\ns\ds=\1n\lan QX,X\ran\1n+\1n2\lan SX,v\1n-\1n R^{-1}SX\1n-
\1n R^{-1}r\ran\1n+\1n\lan R(v\1n-\1n R^{-1}SX\1n-\1n R^{-1}r),v\1n-\1n R^{-1}SX\1n-\1n R^{-1}r\ran\\
\ns\ds\qq+2\lan q,X\ran+2\lan r,v-R^{-1}SX-R^{-1}r\ran\\
\ns\ds=\1n\lan(Q\1n-\1n S^\top R^{-1}S)X,X\ran\1n+\1n\lan R(v\1n-\1n R^{-1}r),v\1n-\1n R^{-1}r\ran\1n+\1n2\lan q\1n-\1n S^\top R^{-1}r,X\ran\1n+\1n2\lan r,v\1n-\1n R^{-1}r\ran\\
\ns\ds=\lan\h QX,X\ran+\lan Rv,v\ran+2\lan q-S^\top R^{-1}r,X\ran-\lan R^{-1}r,r\ran\\
\ns\ds\equiv\lan\h QX,X\ran+\lan Rv,v\ran+2\lan\h q,X\ran-\f_0,\ea$$
where
$$\h Q=Q-S^\top R^{-1}S,\qq\h q=q-S^\top R^{-1}r,\qq\f_0=r^\top R^{-1}r.$$
Also, under \rf{u=v}, one has
$$\ba{ll}
\ns\ds AX+Bu+b=(A-R^{-1}S)X+Bv+b-BR^{-1}r\equiv\h AX+Bv+\h b,\\
\ns\ds CX+Du+\si=(C-R^{-1}S)X+Dv+\si-DR^{-1}r\equiv\h CX+Dv+\h\si,\ea$$
where
$$\ba{ll}
\ns\ds\h A=A-R^{-1}S,\qq\h b=b-BR^{-1}r,\\
\ns\ds\h C=C-R^{-1}S,\qq\h\si=\si-DR^{-1}r.\ea$$
From the above reduction, we end up with the following state
equation:
\bel{state2}\left\{\2n\ba{ll}
\ds dX(t)=\(\h AX(t)+Bv(t)+\h b\)dt+\(\h CX(t)+Dv(t)+\h\si\)dW(t),\\
\ds X(0)=x,\ea\right.\ee
with the cost functional
\bel{cost2}J_{\scT}(x;v(\cd))={1\over2}\dbE\int_0^T\big[\lan\h QX(t),X(t)\ran+\lan Rv(t),v(t)\ran+2\lan\h q,X(t)\ran-\f_0\big]dt.\ee
It is clear that the (open-loop) optimal pair of the LQ problem
associated with \rf{state2}--\rf{cost2} is the same as that of
the LQ problem associated with the state equation \rf{state2} and the cost functional
$$\h J_{\scT}(x;v(\cd))={1\over2}\dbE\2n\int_0^T\3n\big[\lan\h QX(t),X(t)\ran+\lan Rv(t),v(t)\ran+2\lan\h q,X(t)\ran\big]dt\equiv J_{\scT}(x;v(\cd))+{\f_0T\over2}.$$
Because of the above reduction, one sees that it suffices to consider Problem (SLQ)$_{_T}$ for the state equation \rf{state} with the following cost functional
\bel{cost3}J_{\scT}(x;u(\cd))={1\over2}\dbE\int_0^T\big[\lan QX(t),X(t)\ran+\lan Ru(t),u(t)\ran+2\lan q,X(t)\ran\big]dt.\ee
In the case that $b=\si=q=0$, we denote the corresponding by Problem (SLQ)$^0_{\scT}$, and call it a homogenous LQ problem on $[0,T]$. The value function of Problem (SLQ)$^0_{\scT}$ is denoted by $V_{\scT}^0(x)$.

\section{Stabilizability and the Static Optimization Problem}\label{Sec:Stability}

In this section, we are going to make some further preparations.

\ms

Let us denote by $[A,C]$ the following linear homogeneous uncontrolled SDE:
\bel{[A,C]}dX(t)=AX(t)dt+CX(t)dW(t),\q t\ges0.\ee
For any $x\in\dbR^n$, there exists a unique solution $X(\cd)\equiv X(\cd\,;x)$ of the above satisfying $X(0;x)=x$.
We recall the following classical notion.

\begin{definition} System $[A,C]$ is said to be

\ms

(i) {\it $L^2$-stable} if
\bel{L^2}\dbE\int_0^\infty|X(t;x)|^2dt<\infty,\qq\forall x\in\dbR^n.\ee

(ii) {\it mean-square exponentially stable} if there exists a $\b>0$ such that
\bel{-bt}\sup_{t\in[0,\infty)}e^{\b t}|X(t;x)|<\infty,\qq\forall x\in\dbR^n.\ee

\end{definition}

To characterize the above notions, we let $\F(\cd)$ be the solution to the matrix SDE
\bel{SDE:Phi0}\left\{\2n\ba{ll}
\ds d\F(t)= A\F(t)dt+C\F(t)dW(t),\q t\ges0,\\
\ds\F(0)=I_n.\ea\right.\ee
We have the following result.

\bl{equivalence} \sl The following are equivalent:

\ms

{\rm(i)} The system $[A,C]$ is mean-square exponentially stable.

\ms

{\rm(ii)} There exist constants $\a,\b>0$ such that
\bel{e-stable}\dbE|\F(t)|^2\les\a e^{-\b t},\q\forall t\ges0.\ee

\ms

{\rm(iii)} It holds that
\bel{L2-stable}\dbE\int_0^\i|\F(t)|^2dt<\i.\ee

\ms

{\rm(iv)} The system $[A,C]$ is $L^2$-stable.

\ms

{\rm(v)} There exists a $P\in\dbS^n_+$ such that
\bel{PA+AP+CPC<0}PA+A^\top P+C^\top PC<0.\ee

\el

The proof is straightforward (see \cite{Ait-Zhou2000}, \cite{Huang-Li-Yong2015}, or the book \cite{Sun-Yong2020}).

\begin{corollary}\label{crllry:e-stable}
If the system $[A,C]$ is $L^2$-stable, then
$$ |e^{At}| \les \sqrt{\a }e^{-(\b t/2)}, \q\forall t\ges0,$$
where $\a$ and $\b$ are as in \rf{e-stable}.
\end{corollary}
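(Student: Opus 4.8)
The plan is to recognize the deterministic matrix exponential $e^{At}$ as the \emph{mean} of the stochastic fundamental matrix $\F(\cd)$ defined in \rf{SDE:Phi0}, and then to control that mean by the second-moment bound supplied by \autoref{equivalence}. Concretely, I would first establish the identity $\dbE[\F(t)]=e^{At}$, and afterwards estimate $|e^{At}|=|\dbE[\F(t)]|$ by passing the expectation through the norm (Jensen) and invoking the exponential estimate \rf{e-stable}.

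For the first step I would take expectations in the integral form of \rf{SDE:Phi0}, namely $\F(t)=I_n+\int_0^t A\F(s)\,ds+\int_0^t C\F(s)\,dW(s)$. Because the coefficients are constant and, by \autoref{equivalence}, $\dbE|\F(s)|^2\les\a e^{-\b s}$ is finite and locally bounded, the Itô integral $\int_0^t C\F(s)\,dW(s)$ is a genuine martingale and thus has zero expectation. Setting $m(t)\deq\dbE[\F(t)]$, Fubini's theorem yields $m(t)=I_n+\int_0^t A\,m(s)\,ds$, so $m(\cd)$ solves the linear ODE $\dot m(t)=Am(t)$ with $m(0)=I_n$; by uniqueness, $m(t)=e^{At}$. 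Then, using $e^{At}=\dbE[\F(t)]$ together with convexity of the norm (Jensen's inequality) and the Cauchy--Schwarz inequality, I would estimate
$$|e^{At}|=|\dbE[\F(t)]|\les\dbE|\F(t)|\les\big(\dbE|\F(t)|^2\big)^{1/2}.$$
Since $L^2$-stability is, by \autoref{equivalence}, equivalent to mean-square exponential stability and hence to the bound \rf{e-stable}, this gives $|e^{At}|\les(\a e^{-\b t})^{1/2}=\sqrt{\a}\,e^{-\b t/2}$, as claimed.

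The only point requiring genuine care is the vanishing of the expectation of the stochastic integral, which rests on the square-integrability $\dbE\int_0^t|C\F(s)|^2\,ds<\i$; but this is exactly what the finite second moment $\dbE|\F(s)|^2\les\a e^{-\b s}$ from \autoref{equivalence} delivers. Everything else is a routine application of Jensen's and Cauchy--Schwarz inequalities, so I expect no substantive obstacle beyond this standard integrability verification.
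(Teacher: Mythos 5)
Your proof is correct and is exactly the paper's argument: the paper's entire proof is the remark that $e^{At}=\dbE[\F(t)]$, with the martingale/Fubini justification of this identity and the Jensen--Cauchy--Schwarz estimate $|\dbE[\F(t)]|\les\big(\dbE|\F(t)|^2\big)^{1/2}\les\sqrt{\a}\,e^{-\b t/2}$ left implicit, which you have simply written out in full.
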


\begin{proof} It is easy to prove by noting that $e^{At}=\dbE[\F(t)]$.
\end{proof}

Now we denote by $[A,C;B,D]$ the following controlled (homogeneous) linear system:
\bel{[A,C;B,D]}\left\{\2n\ba{ll}
\ds dX(t)=[AX(t)+Bu(t)]dt+[CX(t)+Du(t)]dW(t),\q t\ges0,\\
\ns\ds X(0)=x,\ea\right.\ee
where $u(\cd)$ is taken from the following set of {\it admissible controls}
$$\sU]0,\infty)\1n=\1n\Big\{u\1n:\1n[0,\infty)\1n\times\1n\Om\to\dbR^m\bigm|u(\cd)\hb{ is $\dbF$-progressivey measurable, }
\dbE\int_0^\infty\3n|u(t)|^2dt\1n<\1n\infty\Big\}.$$
We recall the following notion (see \cite{Huang-Li-Yong2015} and \cite{Sun-Yong2020}).

\begin{definition}\label{def:stabilizable}
The system $[A,C;B,D]$ is said to be {\it $L^2$-stabilizable} if there exists a matrix $\Th\in\dbR^{m\times n}$
such that the (closed-loop) system $[A+B\Th,C+D\Th]$ is $L^2$-stable.
In this case, $\Th$ is called a {\it stabilizer} of $[A,C;B,D]$.
\end{definition}

We point out that the $L^2$-stabilizability is a weaker condition than the {\it null controllability}, meaning that for any initial state $x$ there exists a control to steer the system state from $x$ to $0$ in a finite time interval. In fact, according to \cite[Theorem 3.2]{Sun-Yong2018}, the system $[A,C;B,D]$ is $L^2$-stabilizable
if and only if for any initial state $x$, there exists an $\dbR^m$-valued process $u(\cd)\in\sU[0,\infty)$ such that the solution $X(\cd\,;x,u(\cd))$ of \rf{[A,C;B,D]} is also square-integrable over $[0,\i)$. Whereas, the system $[A,C;B,D]$ is null controllable, then, for any $x\in\dbR^n$, we can find a $v(\cd)\in\sU[0,T]$ for some $T>0$, such that $X(T;x,v(\cd))=0$. Thus, with
$$u(t)\deq\left\{\begin{aligned}
& v(t),  && t\in[0,T], \\
& 0,     && t>T,
\end{aligned}\right.$$
the corresponding solution $X(\cd)\equiv X(\cd\,;x,u(\cd))$ satisfies $\dbE\int_0^\i|X(t)|^2 dt<\i$. This shows that the null controllability is stronger than the $L^2$-stabilizability. Controllability for linear ODEs is very standard in control theory, see \cite{Kalman1960}. For linear SDEs, the situation is much more complicated, see \cite{Peng1994, Liu-Peng2010, Wang-Yang-Yong-Yu2017} for some known results.

\ms

We now introduce the following hypotheses.

\begin{taggedassumption}{(H1)}\label{TP:H1}
The system $[A,C;B,D]$ is $L^2$-stabilizable.
\end{taggedassumption}

\begin{taggedassumption}{(H2)}\label{TP:H2}
The weighting matrices $Q\in\dbS^n_+$ and $R\in\dbS^m_+$.
\end{taggedassumption}

Under the above \ref{TP:H1}--\ref{TP:H2}, we may consider the  state equation \rf{[A,C;B,D]} with the cost functional
\bel{cost0}J^0_\infty(x;u(\cd))={1\over2}\dbE\int_0^\infty\(\lan QX(t),X(t)\ran+\lan Ru(t),u(t)\ran\)dt.\ee
We could formulate the following homogeneous LQ problem in the infinite horizon $[0,\infty)$.

\ms

\bf Problem (SLQ)$_\infty^0$. \rm For each $x\in\dbR^n$, find $\bar u(\cd)\in\sU[0,\infty)$ such that
\bel{inf}J^0_\infty(x;\bar u(\cd))=\inf_{u(\cd)\in\sU[0,\infty)}J^0_\infty(x;u(\cd))\equiv V^0_\infty(x).\ee

The following collects the relevant results of Problem (SLQ)$_\infty^0$. See \cite{Sun-Yong2020} for a proof.

\bp{LQ infinite} \sl Let {\rm\ref{TP:H1}--\ref{TP:H2}} hold. Then for each $x\in\dbR^n$, Problem (LQ)$_\infty^0$ admits a unique open-loop optimal control $\bar u(\cd)$. Moreover, the following ARE
\bel{ARE2}PA+A^\top P+C^\top PC+Q-(PB+C^\top PD)(R+D^\top PD)^{-1}(B^\top P+D^\top PC)=0,\ee
admits a unique solution $P\in\dbS^n_+$ such that the open-loop optimal control $\bar u(\cd)$ admits the following {\it closed-loop representation}:
\bel{bar u}\bar u(t)=-\Th(t)\bar X(t),\qq t\in[0,\infty),\ee
where
\bel{Th1}\Th=-(R+D^\top PD)^{-1}(B^\top P+D^\top PC)\ee
is a stabilizer of $[A,C;B,D]$, and the value function has the following quadratic form:
\bel{V^0}V^0_\infty(x)=\lan Px,x\ran,\qq\forall x\in\dbR^n.\ee

\ep

\ms

In the above case, $P$ is referred to as a {\it stabilizing solution} of the ARE with respect to the system $[A,C;B,D]$. Also, by a direct comparison, making use of \ref{TP:H2}, we see that
\bel{V<V}V_{\scT}^0(x)\les V_\infty^0(x),\qq\forall x\in\dbR^n.\ee

\ms

Now, we define
\bel{V,F}\ba{ll}
\ds\sV=\{(x,u)\in\dbR^n\times\dbR^m\bigm|Ax+Bu+b=0\},\\
\ns\ds F(x,u)=\lan Qx,x\ran+\lan Ru,u\ran+2\lan q,x\ran+\lan P(Cx+Du+\si),Cx+Du+\si\ran,\ea\ee
and introduce the following static optimization problem.

\ms

\bf Problem (O). \rm Find $(x^*,u^*)\in\sV$ such that
\bel{min F}F(x^*,u^*)=\min_{(x,u)\in\sV}F(x,u).\ee

\ms

For the above static optimization problem, we have the following result.

\bp{static*} \sl Let {\rm\ref{TP:H1}--\ref{TP:H2}} hold. Then Problem {\rm(O)} admits a unique solution $(x^*,u^*)\in\sV$ which, together with a {\it Lagrange multiplier} $\l^*\in\dbR^n$, is characterized by the following system of linear equations:
\bel{L-conditions}\left\{\2n\ba{ll}
\ds Qx^*+A^\top\l^*+C^\top P(Cx^*+Du^*+\si)+q=0,\\
\ds Ru^*+B^\top\l^*+D^\top P(Cx^*+Du^*+\si)=0.\ea\right.\ee

\ep

\begin{proof} \rm Since $[A,C;B,C]$ is stabilizable, so must be $[A;B]\equiv[A,0;B,0]$. In fact, if $\Th$ is a stabilizer of $[A,C;B,D]$, then there exists a $P\in\dbS^n_+$ such that
$$P(A+B\Th)+(A+B\Th)^\top P+(C+D\Th)^\top P(C+D\Th)<0,$$
which leads
$$P(A+B\Th)+(A+B\Th)^\top P<0.$$
Thus, $[A;B]$ is stabilizable which is equivalent to that the matrix $(A-\l I,B)$ is of full rank for any $\l\in\dbC$ with $\hb{Re}\,\l\ges0$ (\cite{Li-Yong-Zhou2010}). Consequently, by taking $\l=0$, one sees that the matrix $(A,B)$ has rank $n$. Then the feasible set $\sV$ of Problem (O) is a non-empty closed convex set. Further, since $P>0$,
$$F(x,u)\ges\lan Qx,x\ran+\lan Ru,u\ran+2\lan q,x\ran$$
is coercive on $\dbR^n\times\dbR^m$ due to \ref{TP:H2}. Therefore, Problem (O) admits a unique solution.

\ms

Next, let
$$G(x,u)=Ax+Bu+b=(A,B)\begin{pmatrix}x\\ u\end{pmatrix}.$$
Then
$$G_{(x,u)}(x,u)=(A,B),$$
which is of full rank. Hence, the equality constraint is regular. Consequently, the optimal solution $(x^*,u^*)$ can be obtained by the Lagrange multiplier method (\cite{Yong2018}). Now, we form Lagrange function:
$$L(x,u,\l)=F(x,u)+2\l^\top(Ax+Bu+b).$$
Suppose $(x^*,u^*)$ is the unique optimal solution of the above problem. Then
\bel{L-conditions2}\ba{ll}
\ns\ds0={1\over2}L_x(x^*,u^*,\l^*)^\top=(Q+C^\top PC)x^*+C^\top PDu^*+q+C^\top P\si+A^\top\l^*,\\
\ns\ds0={1\over2}L_u(x^*,u^*,\l^*)^\top=(R+D^\top PD)u^*+D^\top PCx^*+D^\top P\si+B^\top\l^*.\ea\ee
This leads to \rf{L-conditions}. Further, we may write the above as follows:
$$\begin{pmatrix}Q+C^\top PC&C^\top PD\\ D^\top PC&R+D^\top PD\end{pmatrix}\begin{pmatrix}x^*\\ u^*\end{pmatrix}=\begin{pmatrix}C^\top\\ D^\top\end{pmatrix}P\si+\begin{pmatrix}A^\top\\ B^\top\end{pmatrix}\l^*.$$
Since the coefficient matrix is invertible, there exists a unique solution $(x^*,u^*)$ and
$$\begin{pmatrix}x^*\\u^*\end{pmatrix}=\begin{pmatrix}Q+C^\top PC&C^\top PD\\ D^\top PC&R+D^\top PD\end{pmatrix}^{-1}\[\begin{pmatrix}C^\top\\ D^\top\end{pmatrix}P\si+\begin{pmatrix}A^\top\\ B^\top\end{pmatrix}\l^*\].$$
By the equality constraint, one has
$$-b=Ax^*+Bu^*=(A,B)\begin{pmatrix}Q+C^\top PC&C^\top PD\\ D^\top PC&R+D^\top PD\end{pmatrix}^{-1}\[\begin{pmatrix}C^\top\\ D^\top\end{pmatrix}P\si+\begin{pmatrix}A^\top\\ B^\top\end{pmatrix}\l^*\].$$
Since $(A,B)\in\dbR^{n\times(n+m)}$ has rank $n$, we see that
$\l^*$ is uniquely determined by the following:
$$\ba{ll}
\ns\ds\l^*=-\[(A,B)\begin{pmatrix}Q+C^\top PC&C^\top PD\\ D^\top PC&R+D^\top PD\end{pmatrix}^{-1}\begin{pmatrix}A^\top\\ B^\top\end{pmatrix}\]^{-1}\\
\ns\ds\qq\qq\cd\[b+(A,B)\begin{pmatrix}Q+C^\top PC&C^\top PD\\ D^\top PC&R+D^\top PD\end{pmatrix}^{-1}\begin{pmatrix}C^\top\\ D^\top\end{pmatrix}P\si\].\ea$$
Hence, $(x^*,u^*)$ is uniquely determined by \rf{L-conditions}. \end{proof}

\ms

Now, we make some simple observation on problems \rf{Fake} and Problem (O) (or \rf{static}). Let
$$\sV_0=\{(x,u)\in\dbR^n\times\dbR^m\bigm|Ax+Bu+b=0,~Cx+Du+\si=0\}.$$
Then, it is clear that $\sV_0$ is a subset of $\sV$, and $\sV_0$ could even be empty. Moreover, if $(x^*,u^*)\in\sV$ is an optimal solution of Problem (O) and $(x^*,u^*)\in\sV_0$, then it is an optimal solution to problem \rf{Fake}. However, if Problem (O) has a unique solution $(x^*,u^*)\notin\sV_0$ (in particular, if $\sV_0=\varnothing$), then problems \rf{Fake} and Problem (O) are totally different. We present two illustrative examples below.

\bex{} \rm Let $n=m=1$ and consider the following state equation:
$$\left\{\2n\ba{ll}
\ds dX(t)=u(t)dt+X(t)dW(t),\qq t\ges0,\\
\ns\ds X(0)=x,\ea\right.$$
with cost functional
$$J_{\scT}(u(\cd))={1\over2}\dbE\int_0^T\(2|X(t)|^2+|u(t)|^2+4X(t)\)dt.$$
We see that in this case,
$$A=D=0,\q B=C=1,\q b=\si=0,\q Q=2,\q R=1,\q q=2.$$
Then the ARE reads:
$$0=P+1-P^2=-(P-2)(P+1).$$
Hence the positive definite solution is $P=2$. Thus, Problem (O) is equivalent to
$$\left\{\2n\ba{ll}
\ns\ds\hb{Minimize}\q 4x^2+u^2+4x,\\
\ns\ds\hb{Subject to}\q u=0.\ea\right.$$
It is straightforward that the solution is given by
$$x^*=-{1\over2},~u^*=0.$$
Whereas, \rf{Fake} reads
$$\left\{\2n\ba{ll}
\ns\ds\hb{Minimize}\q2x^2+u^2+4x,\\
\ns\ds\hb{Subject to }~x=0,\q u=0,\ea\right.$$
whose solution is trivially given by
$$\bar x^*=0,~\bar u^*=0.$$
Hence, the solutions to these two problems are different.

\ex

\bex{} \rm Let $n=m=1$ and consider the following state equation:
$$\left\{\2n\ba{ll}
\ds dX(t)=\big[X(t)+u(t)+1\big]dt+\big[X(t)+u(t)\big]dW(t),\qq t\ges0,\\
\ns\ds X(0)=x,\ea\right.$$
with cost functional
$$J_{\scT}(u(\cd))={1\over2}\dbE\int_0^T\(|X(t)|^2+|u(t)|^2\)dt.$$
We see that in this case,
$$A=B=C=D=Q=R=b=1,\q\si=q=0.$$
Then
$$\sV_0=\{(x,u)\bigm|x+u+1=0,x+u=0\}=\varnothing.$$
On the other hand, the ARE reads:
$$3P+1-{4P^2\over1+P}=0,$$
which is equivalent to
$$P^2-4P-1=0.$$
Thus, the positive solution is $P=2+\sqrt5$. Hence,
$$F(x,u)=x^2+u^2+(2+\sqrt5)|x+u|^2.$$
Consequently, Problem (O) is well-formulated and it admits a unique optimal solution.

\ex

From the above examples, together with our main result on the turnpike property of the Problem (SLQ)$_{_T}$ which will be presented a little later, we see that \rf{Fake} is not a suitable problem to be considered and the correct on is Problem (O).

\section{Convergence of the Riccati Equation}\label{Sec:Convergence}


For notational simplicity, we define, for each $P\in\dbS^n$:
\begin{equation}\label{cQSR(P)}\left\{\begin{aligned}
\cQ(P) &= PA+A^\top P+C^\top PC+Q,\\
\cS(P) &= B^\top P+D^\top PC,\\
\cR(P) &= R+D^\top PD, \\
\cK(P) &= -\cR(P)^{-1}\cS(P), \text{ provided $\cR(P)$ is invertible.}
\end{aligned}\right.\end{equation}
Then ARE \rf{ARE2} can be written as follows:
\bel{ARE3}\cQ(P)-\cS(P)^\top\cR(P)^{-1}\cS(P)=0.\ee
By Proposition \ref{LQ infinite}, under \ref{TP:H1}--\ref{TP:H2}, the above \rf{ARE3} admits a stabilizing solution $P\in\dbS^n_+$
(with respect to $[A,C;B,D]$), and \rf{V^0} holds.

\ms

According to \autoref{lmm:opti-sys}, we know that under \ref{TP:H1}--\ref{TP:H2}, for each $T>0$, Problem (SLQ)$_{\scT}$
admits a unique optimal control for every initial state $x$. Moreover, the following conclusions hold:

\ms

(i) The optimal pair $(\bar X_{\scT}(\cd),\bar u_{\scT}(\cd))$, together with a pair of adapted processes $(\bar Y_{_T}(\cd)$, $\bar Z_{_T}(\cd))$, satisfies the following optimality system:
\bel{os:ProbT}\left\{\2n\ba{ll}
\ds d\bar X_{\scT}(t)=[A\bar X_{\scT}(t)+B\bar u_{\scT}(t)+b]dt+[C\bar X_{\scT}(t)+D\bar u_{\scT}(t)+\si]dW(t),\\
\ns\ds d\bar Y_{\scT}(t)=-\lt[A^\top\bar Y_{\scT}(t)+C^\top\bar Z_{\scT}(t)+Q\bar X_{\scT}(t)+q\rt]dt+\bar Z_{\scT}(t)dW(t),\\
\ns\ds\bar X_{\scT}(0)=x,\q\bar Y_{\scT}(T)=0,\\
\ns\ds B^\top\bar Y_{\scT}(t)+D^\top\bar Z_{\scT}(t)+R\bar u_{\scT}(t)= 0, \q\ae~t\in[0,T],~\as,\ea\right.\ee

\ms

(ii) The differential Riccati equation
\bel{Ric:PT}\left\{\2n\ba{ll}
\ds\dot P_{\scT}(t)+\cQ(P_{\scT}(t))-\cS(P_{\scT}(t))^\top \cR(P_{\scT}(t))^{-1}\cS(P_{\scT}(t))=0,\q t\in[0,T],\\
\ns\ds P_{\scT}(T)=0\ea\right.\ee
admits a unique solution $P_{\scT}(\cd)\in C([0,T];\dbS^n)$ satisfying $P_{\scT}(t)>0$ for all $0\les t<T$, and the (open-loop) optimal control has the following state feedback representation:
\bel{bar u00}\bar u(t)=-\cK(P_{\scT}(t))\bar X(t)-\cR(P_{\scT}(t))^{-1}\big[B^\top\f+D^\top P_{\scT}(t)\si\big],\ee
where $\f(\cd)$ is the solution to the following:
\bel{SLQ:ODE-f}\left\{\2n\ba{ll}
\ns\ds\dot\f(t)\1n+\1n\big[A\1n-\1n B\cK(P_{\scT}(t))\big]^\top\f(t)
\1n+\1n\big[C\1n-\1n D\cK(P_{\scT}(t))\big]^\top P_{\scT}(t)\si(t)\1n+\1n P_{\scT}(t)b\1n+\1n q\1n=\1n0,\\
\ns\ds\qq\qq\qq\qq\qq\qq\qq\qq\qq\qq\qq\qq\qq t\in[0,T],\\
\ns\ds\f(T)=0.\ea\right.\ee

\ms

{\rm(v)} The value function is given by
$$\ba{ll}
\ds V_{\scT}(x)={1\over2}\lan P_{\scT}(0)x,x\ran+\lan\f(0),x\ran+{1\over2}\int_0^T\(\lan P_{\scT}(t)\si,\si\ran+2\lan\f(t),b\ran\\
\ns\ds\qq\qq-\big|\cR(P_{\scT}(t))^{-{1\over2}}\big[B^\top\f(t)+D^\top P_{\scT}(t)\si\big]\big|^2\)dt.\ea$$

\ss

Note that by setting $b=\si=q=0$, we have the corresponding result for the (homogeneous) Problem (SLQ)$_{\scT}^0$. For such a case, $\f(\cd)=0$, and in particular, also taking into account of \rf{V^0}--\rf{V<V},
\bel{V^0_T}V^0_{\scT}(x)={1\over2}\lan P_{\scT}(0)x,x\ran\les{1\over2}\lan Px,x\ran=V^0_\infty(x),\qq x\in\dbR^n.\ee

\ms

We now look at the convergence property of $P_{\scT}(\cd)$ as $T\to\i$, which plays an essential role in the turnpike property of Problem (SLQ)$_{\scT}$. For this, let us present the following result first.

\begin{proposition}\label{prop:Si-increase}
Let {\rm\ref{TP:H1}--\ref{TP:H2}} hold. Then the equation
\begin{equation}\label{Ric:Sigma}\left\{\begin{aligned}
& \dot{\Si}(t)-\cQ(\Si(t))+\cS(\Si(t))^\top\cR(\Si(t))^{-1}\cS(\Si(t))=0, \q t\in[0,\i),\\
& \Si(0) =0
\end{aligned}\right.\end{equation}
admits a unique solution $\Si(\cd)\in C([0,\i);\dbS^n)$ satisfying
\bel{<P}0<\Si(s)\les\Si(t)\les P, \qq\forall\, 0<s<t<\i,\ee
with $P\in\dbS_+^n$ being the stabilizing solution of \rf{ARE3} with respect to $[A,C;B,D]$. Moreover, $\Si(T-t)=P_{\scT}(t)$ for every $0\les t\les T$.

\end{proposition}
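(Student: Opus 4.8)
The plan is to recognize $\Si$ as nothing but the time-reversal of the finite-horizon Riccati solution $P_{\scT}$, and then transport the already-established properties of $P_{\scT}$ back onto $\Si$. First I would settle local well-posedness: the right-hand side $\cQ(M)-\cS(M)^\top\cR(M)^{-1}\cS(M)$ is a rational, hence locally Lipschitz, function of $M$ on the open set $\{M\in\dbS^n:\cR(M)=R+D^\top MD\text{ is invertible}\}$, which contains $M=0$ because $\cR(0)=R>0$ by \ref{TP:H2}. The Picard--Lindel\"of theorem then yields a unique maximal solution $\Si(\cd)\in C^1([0,T_{\max});\dbS^n)$ with $\Si(0)=0$.

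The key step is the reversal computation. Fix any $T\in(0,T_{\max})$ and set $\ti P(t):=\Si(T-t)$ for $t\in[0,T]$. Differentiating and substituting the $\Si$-equation shows that $\ti P$ satisfies exactly the backward Riccati equation \rf{Ric:PT} with terminal value $\ti P(T)=\Si(0)=0$; by the uniqueness asserted in \autoref{lmm:opti-sys}(iii) we get $\ti P=P_{\scT}$, i.e. $\Si(T-t)=P_{\scT}(t)$ on $[0,T]$. Taking $t=0$ gives $\Si(T)=P_{\scT}(0)$ for every admissible horizon $T$. Since $P_{\scT}(0)\ges0$ by \autoref{lmm:opti-sys}(iii) and $P_{\scT}(0)\les P$ by \rf{V^0_T}, this produces the uniform a priori bound $0\les\Si(t)\les P$ throughout $(0,T_{\max})$. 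That bound confines $\Si$ to the compact set $\{M\in\dbS^n:0\les M\les P\}$, on which $\cR(M)\ges R>0$ is invertible with uniformly bounded inverse, so the standard continuation theorem forces $T_{\max}=\i$. This simultaneously delivers global existence, uniqueness, and the upper estimate $\Si\les P$.

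It remains to prove strict positivity and monotonicity, which I would also read off from the reversal identity. Writing $P_t(\cd)$ for the solution of \rf{Ric:PT} with terminal time $t$, the identity with horizon $T=t$ gives $\Si(t)=P_t(0)$, and \rf{V^0_T} identifies this with twice the value of the homogeneous problem, $\lan\Si(t)x,x\ran=2V^0_t(x)$. Strict positivity $\Si(t)>0$ for $t>0$ then follows from $P_t(0)>0$ (the positive-definite case of \autoref{lmm:opti-sys}(iii), valid since $Q>0$ under \ref{TP:H2}). For monotonicity I would compare value functions across horizons: given $0<s<t$ and any control admissible on $[0,t]$, its restriction to $[0,s]$ is admissible there and, because the running cost $\lan Qx,x\ran+\lan Ru,u\ran$ is nonnegative under \ref{TP:H2}, carries no larger cost; taking infima yields $V^0_s(x)\les V^0_t(x)$ for every $x$, hence $\Si(s)\les\Si(t)$. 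This gives the full chain $0<\Si(s)\les\Si(t)\les P$.

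The main obstacle I anticipate is not a single delicate estimate but getting the logical order right. Global existence and the bounds are not available from the ODE in isolation — the quadratic nonlinearity could in principle blow up or drive $\cR(\Si)$ to singularity — and they become accessible only after the reversal identification pins $\Si$ to the already-controlled object $P_{\scT}$. The linchpin throughout is the uniqueness in \autoref{lmm:opti-sys}(iii), which is precisely what lets positivity, the bound \rf{V^0_T}, and the horizon-monotonicity of $V^0_{\scT}$ be transferred verbatim from $P_{\scT}$ to $\Si$.
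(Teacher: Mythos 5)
Your proposal is correct and takes essentially the same route as the paper's proof: both rest on the time-reversal identification $\Si(T-t)=P_{\scT}(t)$ (justified by uniqueness for the Riccati equation), the horizon-monotonicity of the homogeneous value functions $V^0_{\scT}$ under \ref{TP:H2}, the bound \rf{V^0_T} against the stabilizing solution $P$, and strict positivity from \autoref{lmm:opti-sys}(iii). The only difference is organizational: the paper gets global existence for free by defining $\Si(t)\deq P_{\scT}(T-t)$ directly and checking consistency across horizons, whereas you construct the solution locally by Picard--Lindel\"of and then use the identification as an a priori bound to continue it globally --- an equivalent, slightly more explicit bookkeeping.
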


\begin{proof} For fixed but arbitrary $0<T_1<T_2<\i$, we define
$$\ba{ll}
\ds\Si_1(t)\deq P_{_{\sc T_1}}(T_1-t),\qq0\les t\les T_1,\\
\ds\Si_2(t)\deq P_{_{\sc T_2}}(T_2-t),\qq 0\les t\les T_2.\ea$$
On the interval $[0,T_1]$, both $\Si_1$ and $\Si_2$ solve the same equation
$$\left\{\2n\ba{ll}
\ds\dot\Si(t)-\cQ(\Si(t))+\cS(\Si(t))^\top\cR(\Si(t))^{-1}\cS(\Si(t))=0, \\
\ds\Si(0)=0.\ea\right.$$
By the uniqueness, we must have
\bel{Si1=Si2}\Si_1(t)=\Si_2(t),\qq\forall t\in[0,T_1].\ee
Then the function $\Si:[0,\i)\to\dbS^n$ defined by
$$ \Si(t) \deq P_{\scT}(T-t) $$
is independent of the choice of $T\ges t$ and is a solution of \rf{Ric:Sigma}. We now claim that
$$0<\Si(s)\les\Si(t)\les P,\qq\forall\,0<s<t<\i.$$
To see this, we note that for any $0\les T_1<T_2<\infty$, by \ref{TP:H2}, we have
$$\lan P_{\scT_1}(0)x,x\ran=2V^0_{\scT_1}(x)\les2V^0_{\scT_2}(x)=
\lan P_{\scT_2}(0)x,x\ran,\qq\forall x\in\dbR^n.$$
Thus,
$$\Si(T_1)=P_{\scT_1}(0)\les P_{\scT_2}(0)=\Si(T_2).$$
Finally, by \rf{V^0_T}, we obtain our conclusion.
\end{proof}

The above result leads to the following convergence.

\begin{proposition}\label{prop:sol-ARE} Let {\rm\ref{TP:H1}--\ref{TP:H2}} hold and $\Si(\cd)$ be the solution to the ODE \rf{Ric:Sigma}. Then the limit $\ds P_\infty\deq \lim_{t\to\i}\Si(t)$ exists, which is the stabilizing solution of the ARE \rf{ARE3} with respect to $[A,C;B,D]$, and is the one appearing in the representation \rf{V^0} of the value function $V_\infty^0(\cd)$ of Problem {\rm(SLQ)$^0_\infty$}.

\end{proposition}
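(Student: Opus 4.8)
The plan is to exploit the monotonicity already recorded in Proposition~\ref{prop:Si-increase} to extract the limit, and then to identify it with the stabilizing solution $P$ by a Lyapunov argument powered by \ref{TP:H2}. First, by Proposition~\ref{prop:Si-increase} the map $t\mapsto\Si(t)$ is nondecreasing in the order of $\dbS^n$ and bounded above by $P$, with $\Si(s)>0$ for $s>0$. Hence for each fixed $x\in\dbR^n$ the scalar function $t\mapsto\lan\Si(t)x,x\ran$ is nondecreasing and bounded by $\lan Px,x\ran$, so it converges; polarization then gives convergence of every entry of $\Si(t)$, so $P_\i\deq\lim_{t\to\i}\Si(t)$ exists in $\dbS^n$ and satisfies $0<\Si(s)\les P_\i\les P$, in particular $P_\i\in\dbS^n_+$.

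Next I would show that $P_\i$ solves the ARE \rf{ARE3}. Since $\cR(\Si(t))=R+D^\top\Si(t)D\ges R>0$ stays uniformly invertible along the trajectory, the right-hand side of \rf{Ric:Sigma}, namely $\cQ(\Si(t))-\cS(\Si(t))^\top\cR(\Si(t))^{-1}\cS(\Si(t))$, is continuous in $\Si$, so $\dot\Si(t)$ converges as $t\to\i$ to $\cQ(P_\i)-\cS(P_\i)^\top\cR(P_\i)^{-1}\cS(P_\i)$. Because $\Si(t)$ itself converges, a differentiable $\dbS^n$-valued function whose derivative has a limit must have derivative limit $0$ (read off from $\Si(t+1)-\Si(t)=\int_t^{t+1}\dot\Si(s)\,ds\to0$); hence $\cQ(P_\i)-\cS(P_\i)^\top\cR(P_\i)^{-1}\cS(P_\i)=0$, i.e. $P_\i$ is a solution of \rf{ARE3}.

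The decisive step is to prove that $P_\i$ is the \emph{stabilizing} solution, which a priori is not automatic because \rf{ARE3} may admit several positive solutions. Set $\Th_\i\deq\cK(P_\i)$ and $\bar A\deq A+B\Th_\i$, $\bar C\deq C+D\Th_\i$. A direct completion-of-squares identity rewrites the ARE in closed-loop form,
$$\cQ(P_\i)-\cS(P_\i)^\top\cR(P_\i)^{-1}\cS(P_\i)=P_\i\bar A+\bar A^\top P_\i+\bar C^\top P_\i\bar C+Q+\Th_\i^\top R\Th_\i,$$
so that the ARE yields $P_\i\bar A+\bar A^\top P_\i+\bar C^\top P_\i\bar C=-(Q+\Th_\i^\top R\Th_\i)\les-Q<0$, where the strict inequality uses $Q\in\dbS^n_+$ from \ref{TP:H2}. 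Since $P_\i\in\dbS^n_+$, criterion (v) of Lemma~\ref{equivalence} applies with the distinguished matrix taken to be $P_\i$ and shows that $[\bar A,\bar C]=[A+B\Th_\i,C+D\Th_\i]$ is mean-square exponentially stable; thus $\Th_\i$ is a stabilizer and $P_\i$ is a stabilizing solution of \rf{ARE3}. By the uniqueness of the stabilizing solution in Proposition~\ref{LQ infinite}, $P_\i=P$, which is exactly the matrix appearing in the representation \rf{V^0} of $V^0_\i(\cd)$.

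I expect the only genuinely delicate point to be this last identification: monotone convergence and passage to the limit in the Riccati ODE are routine, whereas ruling out a non-stabilizing limit is where the strong condition \ref{TP:H2} (positive definiteness of $Q$, forcing strict definiteness in the closed-loop Lyapunov equation) does the essential work through Lemma~\ref{equivalence}(v).
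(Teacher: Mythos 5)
Your proposal is correct and follows essentially the same route as the paper's own proof: monotone convergence of $\Si(\cd)$ under the bound $\Si(t)\les P$, the integral identity $\Si(t+1)-\Si(t)=\int_t^{t+1}\dot\Si(s)\,ds$ to pass to the limit in \rf{Ric:Sigma} and obtain \rf{ARE3}, and then the closed-loop (completion-of-squares) rewriting of the ARE combined with $Q,R>0$ and Lemma~\ref{equivalence}(v) to conclude that $\cK(P_\i)$ is a stabilizer. The only differences are cosmetic: you spell out the polarization argument for existence of the limit and make explicit the final identification $P_\i=P$ via uniqueness of the stabilizing solution, which the paper leaves implicit.
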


\begin{proof} From \rf{<P}, we see that
$$P_\infty\deq\lim_{t\to\i}\Si(t)\les P$$
exists and is positive definite. To see that $P_\infty$ satisfies the ARE \rf{ARE3}, we observe that (by \rf{Ric:Sigma})
$$\Si(T+1)-\Si(T) = \int_T^{T+1}\[\cQ(\Si(t))-\cS(\Si(t))^\top\cR(\Si(t))^{-1}\cS(\Si(t))\] dt.$$
Letting $T\to\infty$ yields \rf{ARE3}. Finally, we observe that \rf{ARE3} can be written as
$$ P[A\!+\!B\cK(P)] + [A\!+\!B\cK(P)]^\top\! P + [C\!+\!D\cK(P)]^\top\! P[C\!+\!D\cK(P)]
   + Q + \cK(P)^\top\! R\cK(P) = 0. $$
Since $Q,R>0$, the above implies
$$P[A+B\cK(P)]+[A+B\cK(P)]^\top P+[C+D\cK(P)]^\top P[C+D\cK(P)]<0.$$
Since $P>0$, we conclude from \autoref{equivalence} that $\cK(P)$ is a stabilizer of $[A,C;B,D]$.
\end{proof}

An interesting further issue is how fast $\Si(t)$ converges to the solution $P$ of \rf{ARE3} as $t\to\i$. To address this issue, we need the following lemma.

\begin{lemma}\label{lmm:Pi-stable}
Suppose that the system $[A,C]$ is $L^2$-stable and let $\b$ be the constant in \rf{e-stable}.
Let $f:[0,\i)\times\dbR^{n\times n}\to\dbR^{n\times n}$ be a continuous function satisfying $f(t,0)=0$ and
$$ |f(t,M)-f(t,N)| \les\rho|M-N|(|M|+|N|), \q \forall t\ges0,~\forall M,N\in\dbR^{n\times n}, $$
for some constant $\rho>0$. Then for small initial state $\varPi_0\in\dbR^{n\times n}$, the ODE
\begin{equation*}\left\{\begin{aligned}
\dot\varPi(t) &= \varPi(t) A + A^\top\varPi(t) + C^\top\varPi(t) C + f(t,\varPi(t)), \q t\in[0,\i), \\
    \varPi(0) &= \varPi_0,
\end{aligned}\right.\end{equation*}
has a unique exponentially stable solution with decay rate $\b$, i.e., for some constant $\d>0$,
$$|\varPi(t)|\les\d e^{-\b t},\q\forall t\ges0.$$
\end{lemma}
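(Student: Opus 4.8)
The plan is to regard the equation as a quadratically perturbed linear Lyapunov flow and to solve it by a contraction argument in an exponentially weighted space. Write $\cL$ for the linear operator on $\dbR^{n\times n}$ given by $\cL(M)=MA+A^\top M+C^\top MC$, so that the equation reads $\dot\varPi=\cL(\varPi)+f(t,\varPi)$, and recast it in the mild form $\varPi(t)=e^{t\cL}\varPi_0+\int_0^t e^{(t-s)\cL}f(s,\varPi(s))\,ds$. The entire argument hinges on the decay estimate $|e^{t\cL}M|\les\a e^{-\b t}|M|$ for all $M\in\dbR^{n\times n}$, with the same $\a,\b$ appearing in \rf{e-stable}.

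To establish this decay I would pass to the adjoint operator $\cL^*(N)=AN+NA^\top+CNC^\top$ (that $\cL^*$ is the Frobenius-adjoint of $\cL$ is a one-line computation using the cyclic property of the trace). Applying It\^o's formula to $\F(t)N_0\F(t)^\top$, where $\F$ solves the matrix SDE \rf{SDE:Phi0}, and taking expectations shows that $t\mapsto\dbE[\F(t)N_0\F(t)^\top]$ solves $\dot N=\cL^*(N)$ with $N(0)=N_0$; hence $e^{t\cL^*}N_0=\dbE[\F(t)N_0\F(t)^\top]$. Submultiplicativity of the Frobenius norm then gives $|e^{t\cL^*}N_0|\les\dbE|\F(t)|^2\,|N_0|\les\a e^{-\b t}|N_0|$, where the last bound is exactly \rf{e-stable}, supplied by the $L^2$-stability hypothesis through \autoref{equivalence}. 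Since $e^{t\cL}$ and $e^{t\cL^*}$ are mutually adjoint and adjunction preserves the operator norm, the desired bound $|e^{t\cL}M|\les\a e^{-\b t}|M|$ follows.

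With the linear decay in hand, I would work in the Banach space $X=\{\varPi\in C([0,\i);\dbR^{n\times n}):\|\varPi\|_X:=\sup_{t\ges0}e^{\b t}|\varPi(t)|<\i\}$ and let $\cT$ denote the map sending $\varPi$ to the right-hand side of the mild equation. The hypotheses $f(t,0)=0$ and the quadratic Lipschitz bound yield $|f(s,\varPi(s))|\les\rho|\varPi(s)|^2$ and, on the ball $B_r=\{\|\varPi\|_X\les r\}$, the estimate $|f(s,\varPi_1(s))-f(s,\varPi_2(s))|\les2\rho r\,e^{-\b s}|\varPi_1(s)-\varPi_2(s)|$. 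Because the weight $e^{\b t}$ cancels the factor $e^{-\b(t-s)}$ from the semigroup, each time integral collapses to $\int_0^t e^{-\b s}\,ds\les1/\b$, producing $\|\cT\varPi\|_X\les\a|\varPi_0|+\frac{\a\rho}{\b}r^2$ and $\|\cT\varPi_1-\cT\varPi_2\|_X\les\frac{2\a\rho r}{\b}\|\varPi_1-\varPi_2\|_X$ for $\varPi,\varPi_1,\varPi_2\in B_r$. Choosing $r=\b/(4\a\rho)$ makes the second factor equal to $1/2$, and then imposing $|\varPi_0|\les r/(2\a)$ makes $\cT$ map $B_r$ into itself; the contraction principle produces a unique fixed point $\varPi\in B_r$. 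By continuity of $f$ this mild solution is the classical solution of the ODE, and $\|\varPi\|_X\les r$ is precisely the claimed bound $|\varPi(t)|\les\d e^{-\b t}$ with $\d=r$. Uniqueness within the whole class of rate-$\b$ solutions is then immediate: the difference of two such solutions satisfies a homogeneous linear integral inequality of the form $h(t)\les\text{const}\int_0^t h(s)\,ds$ for $h(t)=e^{\b t}|\varPi_1(t)-\varPi_2(t)|$, to which Gronwall's lemma applies, forcing it to vanish.

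I expect the only genuinely delicate step to be the linear decay estimate. One must identify the correct operator ($\cL$, rather than its adjoint, appears in the equation), verify the stochastic representation $e^{t\cL^*}N_0=\dbE[\F(t)N_0\F(t)^\top]$ through It\^o's formula, and then transfer the bound back to $e^{t\cL}$ via adjunction. Everything after that is a routine weighted contraction. Two minor points deserve care: the quadratic bound makes $f$ only locally Lipschitz, so global existence comes from the fixed point in $X$ rather than from elementary ODE theory; and the smallness of $\varPi_0$ is genuinely needed, since it is what keeps the solution in the regime where the quadratic term $f$ stays dominated by the exponential decay of the linear flow.
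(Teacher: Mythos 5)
Your proof is correct, and its skeleton is the same as the paper's: a contraction-mapping argument in a space of exponentially decaying matrix functions, with the decay of the linear Lyapunov-type flow supplied by the stochastic representation through $\F(\cd)$ and \rf{e-stable}. The one place where you genuinely diverge is how that linear decay estimate $|e^{t\cL}M|\les\a e^{-\b t}|M|$ is obtained. The paper never introduces the adjoint: it applies It\^o's formula to $s\mapsto\F(s)^\top\varPi(t-s)\F(s)$ (a time-reversal trick) and gets, in one stroke, the variation-of-constants representation
$$\varPi(t)=\dbE\big[\F(t)^\top\varPi_0\F(t)\big]+\dbE\int_0^t\F(t-s)^\top f\big(s,M(s)\big)\F(t-s)\,ds,$$
from which the decay of both the homogeneous and forced parts is read off directly; this is needed because a naive forward It\^o computation on $\F(t)^\top\varPi_0\F(t)$ does not visibly produce the operator $\cL$ (the coefficients end up sandwiched inside the expectation). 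You sidestep exactly this difficulty by working with the adjoint $\cL^*(N)=AN+NA^\top+CNC^\top$, for which the forward computation on $\F(t)N_0\F(t)^\top$ is clean (the coefficients multiply from the outside), and then transferring the bound to $e^{t\cL}$ via norm-preservation of adjunction. Both routes are rigorous, and each buys something: the paper's gives the full integral formula for the forced equation in one computation; yours avoids the reversed-time It\^o argument at the cost of the duality detour. The remaining differences are cosmetic: you contract in the weighted norm $\sup_{t\ges0}e^{\b t}|\cd|$ on a ball, while the paper contracts in the uniform metric on the set $\sM=\{M:|M(t)|\les\d e^{-\b t}\}$, with comparable smallness thresholds on $\varPi_0$; and your explicit Gronwall argument for uniqueness among rate-$\b$ solutions makes precise a point the paper leaves implicit in the uniqueness of the fixed point.
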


\begin{proof} Let $\d>0$ be an undetermined constant, and consider the complete metric space (with respect to the uniform metric)
$$\sM=\big\{M(\cd)\in C([0,\i);\dbR^{n\times n});|M(t)|\les\d e^{-\b t},~\forall t\ges0\big\}.$$
Clearly, for each $M(\cd)\in\sM$ and each initial state $\varPi_0\in\dbR^{n\times n}$, the ODE
$$\left\{\2n\ba{ll}
\ds\dot\varPi(t)=\varPi(t)A+A^\top\varPi(t)+C^\top\varPi(t)C+ f(t,M(t)),\q t\in[0,\i),\\
\ds\varPi(0)=\varPi_0\ea\right.$$
has a unique solution $\varPi(\cd)\equiv\sT[M(\cd)]$.
Let $\F(\cd)$ be the solution of \rf{SDE:Phi0}.
By It\^{o}'s rule, we have for $0\les s\les t<\i$,
$$\ba{ll}
\ds d\big[\F(s)^\top\varPi(t-s)\F(s)\big]=-\,\F(s)^\top f(t-s,M(t-s))\F(s) ds\\
\ds\qq\qq\qq\qq\qq\qq+\,\F(s)^\top\big[C^\top\varPi(t-s) + \varPi(t-s)C\big]\F(s)dW(s).\ea$$
Consequently,
$$\ba{ll}
\ds\varPi(t-s)=\dbE\Big\{\[\F(t)\F(s)^{-1}\]^\top \varPi(0)\[\F(t)\F(s)^{-1}\] \\
\ns\ds\qq\qq\qq+\int_s^t \[\F(r)\F(s)^{-1}\]^\top f(t-r,M(t-r)) \[\F(r)\F(s)^{-1}\] dr\Big\}.\ea$$
Taking $s=0$ yields
\bel{Pi-formula}\ba{ll}
\ns\ds\varPi(t)=\dbE\[\F(t)^\top\varPi_0\F(t)\]+\dbE\int_0^t \F(r)^\top f(t-r,M(t-r))\F(r)dr\\
\ns\ds=\dbE\[\F(t)^\top\varPi_0\F(t)\]+\dbE\int_0^t\F(t-s)^\top f(s,M(s))\F(t-s)ds.\ea\ee
It follows from \rf{e-stable} and the assumption on $f$ that
$$\ba{ll}
\ns\ds|\varPi(t)|\les|\varPi_0|\a e^{-\b t}+\int_0^t\rho\a e^{-\b(t-s)}|M(s)|^2ds\\
\ns\ds\qq\q\les|\varPi_0|\a e^{-\b t}+\int_0^t\rho\a \d^2e^{-\b(t+s)}ds\les\a\lt(|\varPi_0|+{\rho\d^2\over\b}\rt)e^{-\b t}.\ea$$
Take $\d\in(0,{\b\over2\a\rho}]$. Then
$$\a\lt({\d\over2\a}+{\rho\d^2\over\b}\rt)\les\d.$$
Thus, for $|\varPi_0|\les{\d\over2\a}$, $\sT$ maps $\sM$ into itself. Further, for any $M(\cd),N(\cd)\in\sM$, we have from \rf{Pi-formula} that
$$\ba{ll}
\ns\ds|\sT[M(\cd)](t)-\sT[N(\cd)](t)|\les\a\int_0^t e^{-\b(t-s)}|f(s,M(s))-f(s,N(s))|ds\\
\ns\ds\qq\qq\qq\qq\qq\q~\les\a\rho\int_0^t e^{-\b(t-s)}|M(s)-N(s)|\(|M(s)|+|N(s)|\) ds \\
\ns\ds\qq\qq\qq\qq\qq\q~\les 2\a\rho\d\int_0^te^{-\b t}|M(s)-N(s)|ds\\
\ns\ds\qq\qq\qq\qq\qq\q~\les \(2\a\rho\d te^{-\b t}\) \sup_{s\ges0}|M(s)-N(s)|.\ea$$
If we take $\d>0$ small enough so that also $2\a\rho\d te^{-\b t}\les1/2$ for all $t\ges0$, then $\sT$ is a contraction mapping on $\sM$. The desired result therefore follows.
\end{proof}

The following results shows the rate of convergence of $\Si(t)$ is exponential.

\begin{theorem}\label{thm:Pi-estable} Let {\rm\ref{TP:H1}--\ref{TP:H2}} hold. There exist positive constants $K,\l>0$ such that
\begin{align}\label{P-Si:stable}
|P-\Si(t)| \les Ke^{-{2\l}t}, \q\forall t\ges0.
\end{align}
\end{theorem}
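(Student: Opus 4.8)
The plan is to study the matrix $\Pi(t)\deq P-\Si(t)$, show it solves a perturbed Lyapunov-type equation driven by the \emph{stable} closed-loop coefficients, and then invoke \autoref{lmm:Pi-stable}. Writing $A_P\deq A+B\cK(P)$ and $C_P\deq C+D\cK(P)$ for the closed-loop coefficients associated with the stabilizing solution $P$, the first step is the completion-of-squares identity: for every $P'\in\dbS^n$ with $\cR(P')$ invertible and every $\Th\in\dbR^{m\times n}$,
$$\begin{aligned}
\cQ(P')-\cS(P')^\top\cR(P')^{-1}\cS(P')
&=(A+B\Th)^\top P'+P'(A+B\Th)+(C+D\Th)^\top P'(C+D\Th)\\
&\quad+Q+\Th^\top R\Th-(\Th-\cK(P'))^\top\cR(P')(\Th-\cK(P')),
\end{aligned}$$
which follows by expanding the right-hand side and completing the square in $\Th$. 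I would apply this with the fixed feedback $\Th=\cK(P)$ to both $P'=P$ and $P'=\Si(t)$. For $P'=P$ the last term vanishes and the left-hand side is $0$ by the ARE \rf{ARE3}, giving the Lyapunov identity $A_P^\top P+PA_P+C_P^\top PC_P+Q+\cK(P)^\top R\cK(P)=0$; for $P'=\Si(t)$ the left-hand side equals $\dot\Si(t)$ by \rf{Ric:Sigma}. Subtracting and using $\dot\Pi=-\dot\Si$ yields
$$\dot\Pi(t)=A_P^\top\Pi(t)+\Pi(t)A_P+C_P^\top\Pi(t)C_P+f(\Pi(t)),\qquad f(\Pi)\deq\big(\cK(P)-\cK(\Si)\big)^\top\cR(\Si)\big(\cK(P)-\cK(\Si)\big),$$
where $\Si=P-\Pi$.

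The choice of feedback $\cK(P)$ is the crux of the argument, for two reasons. First, by \autoref{prop:sol-ARE} the closed-loop system $[A_P,C_P]$ is $L^2$-stable, so \autoref{lmm:Pi-stable} applies with $(A_P,C_P)$ in place of $(A,C)$; keeping the original, possibly unstable, coefficients $(A,C)$ would not work. Second, along the whole trajectory one has $0<\Si(t)\les P$ by \rf{<P}, hence $\cR(\Si)=R+D^\top\Si D\ges R>0$ is invertible and $\cK(\cd)=-\cR(\cd)^{-1}\cS(\cd)$ is smooth near $P$; therefore the factor $\cK(P)-\cK(\Si)=\cK(P)-\cK(P-\Pi)$ vanishes at $\Pi=0$ and is Lipschitz in $\Pi$ near $0$. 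Consequently $f(0)=0$ and $f$ obeys the genuinely quadratic bound $|f(M)-f(N)|\les\rho|M-N|(|M|+|N|)$ required by the lemma (for $M,N$ near $0$, extending $f$ arbitrarily outside a small ball if a global bound is wanted). The main obstacle is precisely recognizing that one must complete the square with the stabilizing feedback $\cK(P)$: this single choice simultaneously converts the linear part into the stable closed-loop operator and renders the residual $f$ \emph{quadratic} rather than merely linear, which is exactly what \autoref{lmm:Pi-stable} needs.

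The remaining difficulty is that \autoref{lmm:Pi-stable} requires a small initial value, whereas $\Pi(0)=P-\Si(0)=P$ is not small. I would circumvent this using \autoref{prop:sol-ARE}: since $\Si(t)\to P$, we have $\Pi(t)\to0$, so there is a time $t_0\ges0$ with $|\Pi(t_0)|$ below the smallness threshold of the lemma. Applying the lemma to the time-shifted function $s\mapsto\Pi(t_0+s)$, which solves the same type of equation with initial value $\Pi(t_0)$, gives $|\Pi(t_0+s)|\les\d e^{-\b s}$ for all $s\ges0$, where $\b$ is the mean-square decay rate of $[A_P,C_P]$ from \rf{e-stable}; equivalently $|\Pi(t)|\les\d e^{\b t_0}e^{-\b t}$ for $t\ges t_0$. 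On the compact interval $[0,t_0]$ the continuous function $\Pi$ is bounded by some $M_0$, and there $e^{\b(t_0-t)}\ges1$, so $|\Pi(t)|\les M_0e^{\b t_0}e^{-\b t}$ as well. Combining the two ranges and setting $K=e^{\b t_0}\max\{M_0,\d\}$ and $2\l=\b$ gives \rf{P-Si:stable}.
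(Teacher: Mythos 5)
Your proof is correct and follows essentially the same route as the paper: set $\varPi=P-\Si$, recast its ODE as a Lyapunov-type equation in the closed-loop coefficients $\cA=A+B\cK(P)$, $\cC=C+D\cK(P)$ perturbed by a quadratically small residual, invoke \autoref{lmm:Pi-stable} together with the $L^2$-stability of $[\cA,\cC]$ from \autoref{prop:sol-ARE}, and bypass the non-small initial value $\varPi(0)=P$ by restarting at a large time $t_0$ where $\varPi(t_0)$ is below the lemma's threshold, exactly as the paper does. The only substantive difference is cosmetic but pleasant: your completion-of-squares identity produces the residual in the manifestly quadratic, positive semidefinite form $\big(\cK(P)-\cK(\Si)\big)^\top\cR(\Si)\big(\cK(P)-\cK(\Si)\big)$, which is tidier than the paper's direct algebraic manipulation leading to \rf{def:f(t,Pi)}; the price, which you correctly acknowledge, is that your $f$ is defined and quadratic only near $\varPi=0$ (where $\cR(P-\varPi)$ is invertible), so the truncation-outside-a-small-ball remark is genuinely needed, whereas the paper's form of $f$ satisfies the global hypotheses of \autoref{lmm:Pi-stable} as written.
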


\begin{proof}
Let $\varPi(t)=P-\Si(t)$. Then
\begin{align*}
\dot\varPi &= \varPi A + A^\top\varPi + C^\top\varPi C - \cS(P)^\top\cR(P)^{-1}\cS(P) + \cS(\Si)^\top\cR(\Si)^{-1}\cS(\Si) \\
&= \varPi[A+B\cK(P)] + [A+B\cK(P)]^\top\varPi + [C+D\cK(P)]^\top\varPi[C+D\cK(P)] \\
&\hp{=\ } -\cS(\varPi)^\top\cK(P) - \cK(P)^\top\cS(\varPi) - \cK(P)^\top D^\top\varPi D\cK(P) \\
&\hp{=\ } +\cK(P)^\top\cS(P) + \cS(\Si)^\top\cR(\Si)^{-1}\cS(\Si)  \\
&= \varPi[A+B\cK(P)] + [A+B\cK(P)]^\top\varPi + [C+D\cK(P)]^\top\varPi[C+D\cK(P)] \\
&\hp{=\ } -[\cS(\varPi) + D^\top\varPi D\cK(P)]^\top\cK(P) + [\cK(P)^\top + \cS(\Si)^\top\cR(\Si)^{-1}]\cS(\Si).
\end{align*}
On the other hand,
\begin{align*}
& [\cK(P)^\top + \cS(\Si)^\top\cR(\Si)^{-1}]\cS(\Si) - [\cS(\varPi) + D^\top\varPi D\cK(P)]^\top\cK(P) \\
&\q= [\cK(P)^\top + \cS(\Si)^\top\cR(\Si)^{-1}]\cS(\Si) + [\cS(\varPi) + D^\top\varPi D\cK(P)]^\top\cR(P)^{-1}[\cS(\varPi)+\cS(\Si)]  \\
&\q= [-\cS(P)^\top + \cS(\Si)^\top\cR(\Si)^{-1}\cR(P) + \cS(\varPi)^\top + \cK(P)^\top D^\top\varPi D]\cR(P)^{-1}\cS(\Si) \\
&\q\hp{=\ }+[\cS(\varPi) + D^\top\varPi D\cK(P)]^\top\cR(P)^{-1}\cS(\varPi)  \\
&\q= \big[\cS(\Si)^\top\cR(\Si)^{-1}D^\top\varPi D + \cK(P)^\top D^\top\varPi D\big] \cR(P)^{-1}\cS(\Si) \\
&\q\hp{=\ }+[\cS(\varPi) + D^\top\varPi D\cK(P)]^\top\cR(P)^{-1}\cS(\varPi)  \\
&\q= \big[-\cK(\Si) + \cK(P)\big]^\top D^\top\varPi D\cR(P)^{-1}\cS(\Si)+[\cS(\varPi) + D^\top\varPi D\cK(P)]^\top\cR(P)^{-1}\cS(\varPi)  \\
&\q= -\big[\cS(\varPi)+D^\top\varPi D\cK(\Si)\big]^\top\cR(P)^{-1}D^\top\varPi D\cR(P)^{-1}\cS(\Si)  \\
&\q\hp{=\ }+[\cS(\varPi) + D^\top\varPi D\cK(P)]^\top\cR(P)^{-1}\cS(\varPi).
\end{align*}
Set $\cA\deq A+B\cK(P)$, $\cC\deq C+D\cK(P)$, and
\begin{equation}\label{def:f(t,Pi)}\begin{aligned}
& f(t,\varPi) \deq [\cS(\tb{\varPi}) + D^\top\tb{\varPi} D\cK(P)]^\top\cR(P)^{-1}\cS(\tb{\varPi}) \\
&\hp{f(t,\varPi)=\ } -[\cS(\tb{\varPi}) + D^\top\tb{\varPi} D\cK(\Si(t))]^\top\cR(P)^{-1}D^\top\tb{\varPi} D\cR(P)^{-1}\cS(\Si(t)).
\end{aligned}\end{equation}
Then we can rewrite the equation for $\varPi(\cd)$ as follows:
$$ \dot\varPi(t) = \varPi(t)\cA + \cA^\top\varPi(t) + \cC^\top\varPi(t)\cC + f(t,\varPi(t)). $$
From \autoref{prop:sol-ARE} we know that the system $[\cA,\cC]$ is $L^2$-stable. Thus, by \autoref{equivalence}, there exist constants $K,\l>0$
such that the solution $\Psi(\cd)$ to
\begin{equation}\label{}\left\{\begin{aligned}
d\Psi(t) &= \cA\Psi(t)dt + \cC\Psi(t)dW(t), \q t\ges0, \\
 \Psi(0) &= I_n
\end{aligned}\right.\end{equation}
satisfies
\begin{align}\label{Psi:stable}
  \dbE|\Psi(t)|^2 \les K e^{-2\l t}, \q\forall t\ges0.
\end{align}
Also, it is easy to see that the function defined by \rf{def:f(t,Pi)} satisfies the properties
in \autoref{lmm:Pi-stable}.
Since $\ds\lim_{t\to\i}\varPi(t)=0$, we conclude from \autoref{lmm:Pi-stable} that \rf{P-Si:stable}
holds for large $t$ and hence all $t\ges0$ (with a possibly different constant $K>0$).
\end{proof}

\section{The Turnpike Property}\label{Sec:TP}

Let $(\bar X_{\scT}(\cd),\bar u_{\scT}(\cd))$ be the optimal pair of Problem (SLQ)$_{\scT}$ for the given initial state $x$ and $(\bar Y_{\scT}(\cd),\bar Z_{\scT}(\cd))$ the adapted solution
to the corresponding adjoint equation in \rf{os:ProbT}.
Let $(x^*,u^*)$ be the unique solution of Problem (O) and $\l^*\in\dbR^n$ the corresponding Lagrange multiplier. Define
\bel{def:hX+hu}
\h X_{\scT}(\cd)=\bar X_{\scT}(\cd)-x^*,\q\h u_{\scT}(\cd) = \bar{u}_{\scT}(\cd)-u^*,\q\h Y_{\scT}(\cd) = \bar{Y}_{\scT}(\cd)-\l^*.\ee
We are now ready to state the main result of this paper, which establishes the exponential turnpike
property of Problem (SLQ)$_{\scT}$ as well as of the adjoint process.

\begin{theorem}\label{thm:e-turnpike-EX}
Let {\rm\ref{TP:H1}--\ref{TP:H2}} hold. Then there exist positive constants $K,\mu>0$, independent of $T$, such that
\bel{hX+hu+hY<Lmu}
\big|\dbE[\h X_{\scT}(t)]\big|+\big|\dbE[\h u_{\scT}(t)]\big|+ \big|\dbE[\h Y_{\scT}(t)]\big|\les K\big[e^{-\mu t}+e^{-\mu(T-t)}\big],\qq\forall t\in[0,T].\ee
\end{theorem}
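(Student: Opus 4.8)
The plan is to reduce the stochastic estimate to a purely deterministic one by taking expectations, and then to exploit the Riccati substitution together with the exponential convergence of $P_{\scT}(\cd)$ furnished by \autoref{thm:Pi-estable}. Taking $\dbE$ throughout the optimality system \rf{os:ProbT}, all martingale terms drop out (the gain matrices below are deterministic), and with the standard relations $\bar Y_{\scT}=P_{\scT}\bar X_{\scT}+\f$ and $\bar Z_{\scT}=P_{\scT}(C\bar X_{\scT}+D\bar u_{\scT}+\si)$, which one verifies by applying It\^o's formula to $P_{\scT}\bar X_{\scT}$ and comparing with \rf{os:ProbT} and \rf{Ric:PT}, the mean control is given by the feedback \rf{bar u00}. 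Writing $\cA=A+B\cK(P)$ and $\cC=C+D\cK(P)$ for the limiting closed-loop coefficients, which define an $L^2$-stable system $[\cA,\cC]$ by \autoref{prop:sol-ARE}, I would first record the equilibrium data: the constraint $Ax^*+Bu^*+b=0$ together with the stationarity conditions \rf{L-conditions} yield $\cA x^*+h_\infty=0$ and $\l^*=Px^*+\f_\infty$, where $\f_\infty$ is the unique equilibrium $\cA^\top\f_\infty+g_\infty=0$ of the ODE \rf{SLQ:ODE-f} and $h_\infty,g_\infty$ are the corresponding limiting (constant) inhomogeneities.

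The first substantive step is to show that $\f(\cd)$ converges to $\f_\infty$ at the rate $e^{-\mu(T-t)}$, uniformly in $T$. Since \rf{SLQ:ODE-f} does not involve the state, this can be carried out independently. Setting $\tilde\f=\f-\f_\infty$, one gets $\dot{\tilde\f}=-\cA_{\scT}^\top\tilde\f-\eta$ with $\tilde\f(T)=-\f_\infty$, where $\cA_{\scT}(t)=A+B\cK(P_{\scT}(t))$ and the forcing $\eta$ is controlled by $|P_{\scT}(t)-P|$. By \autoref{prop:Si-increase} we have $P_{\scT}(t)=\Si(T-t)$, so \autoref{thm:Pi-estable} gives $|P_{\scT}(t)-P|\les Ke^{-2\l(T-t)}$; moreover $0\les P_{\scT}\les P$, so all coefficients are bounded uniformly in $T$ and $\cK(P_{\scT})$ is Lipschitz in $P_{\scT}$, whence $|\eta(t)|\les Ce^{-2\l(T-t)}$. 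Integrating this backward equation from $T$, using that $\cA^\top$ is stable so that $|e^{\cA^\top s}|\les\sqrt{\a}\,e^{-\b s/2}$ by \autoref{crllry:e-stable}, and absorbing the small coefficient $\cA_{\scT}^\top-\cA^\top$ by a Gronwall argument, yields $|\f(t)-\f_\infty|\les Ce^{-\mu(T-t)}$.

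Next I would estimate $\dbE[\h X_{\scT}]=\dbE[\bar X_{\scT}]-x^*$. Subtracting the equilibrium relation $\cA x^*+h_\infty=0$ from the mean state equation in feedback form gives $\frac{d}{dt}\dbE[\h X_{\scT}]=\cA\,\dbE[\h X_{\scT}]+(\cA_{\scT}-\cA)\dbE[\h X_{\scT}]+e(t)$, where the residual $e(t)=(\cA_{\scT}-\cA)x^*+(h_{\scT}-h_\infty)$ collects the differences $\cK(P_{\scT})-\cK(P)$, $P_{\scT}-P$ and $\f-\f_\infty$, so that $|e(t)|\les Ce^{-\mu(T-t)}$ by the previous step. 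By variation of constants with the stable semigroup $e^{\cA t}$ (\autoref{crllry:e-stable}), with $\dbE[\h X_{\scT}(0)]=x-x^*$ fixed, I obtain an integral inequality: the convolution of the kernel $e^{-\b(t-s)/2}$ with the forcing $e^{-\mu(T-s)}$ produces precisely $C\,[e^{-\mu(T-t)}+e^{-\b t/2}]$, i.e. both one-sided terms at once, while the self-referential term has integrable coefficient $e^{-2\l(T-s)}$ uniformly in $T$ and is handled by a boundedness-then-decay bootstrap (first prove a uniform bound, then feed it back into the convolution). This gives $|\dbE[\h X_{\scT}(t)]|\les K[e^{-\mu t}+e^{-\mu(T-t)}]$.

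Finally, the bounds for $\dbE[\h u_{\scT}]$ and $\dbE[\h Y_{\scT}]$ follow algebraically: from \rf{bar u00} and the feedback form of $u^*$, the quantity $\dbE[\h u_{\scT}]$ is a linear combination of $\dbE[\h X_{\scT}]$, $\cK(P_{\scT})-\cK(P)$, $\cR(P_{\scT})^{-1}-\cR(P)^{-1}$, $P_{\scT}-P$ and $\f-\f_\infty$, each already controlled; and from $\bar Y_{\scT}=P_{\scT}\bar X_{\scT}+\f$ together with $\l^*=Px^*+\f_\infty$ one gets $\dbE[\h Y_{\scT}]=P_{\scT}\,\dbE[\h X_{\scT}]+(P_{\scT}-P)x^*+(\f-\f_\infty)$, again of the desired form. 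Summing the three bounds yields \rf{hX+hu+hY<Lmu}. I expect the main obstacle to be the two Gronwall/bootstrap estimates of the second and third steps: one must extract genuine two-sided exponential decay, rather than mere uniform boundedness, from non-autonomous equations whose coefficients converge to their limits only at the terminal rate $e^{-2\l(T-t)}$, and this hinges on coupling the limiting stability of $[\cA,\cC]$ (\autoref{crllry:e-stable}) with the Riccati convergence rate of \autoref{thm:Pi-estable} while keeping all constants independent of $T$.
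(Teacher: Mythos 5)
Your proposal is correct, and it runs on the same analytic engine as the paper's proof: the Riccati convergence $|P_{\scT}(t)-P|\les Ke^{-2\l(T-t)}$ from \autoref{thm:Pi-estable}, the $L^2$-stability of the limiting closed-loop pair $[\cA,\cC]$ from \autoref{prop:sol-ARE}, a backward variation-of-constants/Gronwall bound for an auxiliary ODE, a perturbed-stable-ODE estimate for $\dbE[\h X_{\scT}]$, and algebraic recovery of $\dbE[\h u_{\scT}]$ and $\dbE[\h Y_{\scT}]$. The organization, however, is genuinely different. The paper shifts first: subtracting \rf{5.4} from \rf{5.3}, it recognizes $(\h X_{\scT},\h u_{\scT})$ as the optimal pair of an auxiliary LQ problem, so that the static solution enters only through the terminal datum $\f_{\scT}(T)=-\l^*$ and the forcing $[C+D\Th_{\scT}]^\top(P_{\scT}-P)\si^*$ in \rf{BODE:f}, which decays by construction; the estimate \rf{phi:guji} is then automatic and no steady-state identification is ever needed. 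You instead keep the original problem's decoupling $\bar Y_{\scT}=P_{\scT}\bar X_{\scT}+\f$ and identify $(x^*,u^*,\l^*)$ with the equilibrium of the limiting closed-loop dynamics, through $\l^*=Px^*+\f_\infty$ and $u^*=\Th x^*-\cR(P)^{-1}[B^\top\f_\infty+D^\top P\si]$. These identities are true, but they are the crux of your route and you assert them rather than derive them: verifying them requires not only \rf{L-conditions} and the constraint $Ax^*+Bu^*+b=0$ (as you claim), but also the ARE in the form $P\cA+\cA^\top P+\cC^\top P\cC+Q+\Th^\top R\Th=0$ and the cancellation $\cS(P)^\top+\Th^\top\cR(P)=0$; once they are in place, your $\f-\f_\infty$ and the paper's $\f_{\scT}$ coincide up to the term $(P_{\scT}-P)x^*$, so the two decay estimates are interchangeable. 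What your route buys is that you never have to exhibit the shifted system \rf{hX} as an LQ optimality system and re-invoke \autoref{lmm:opti-sys} for it; what the paper's shift buys is precisely that the equilibrium identities never need to be stated or checked, being absorbed into the derivation of \rf{hX}. Your remaining steps --- the two-sided convolution bound $\int_0^t e^{-\b(t-s)/2}e^{-\mu(T-s)}ds\les Ce^{-\mu(T-t)}$ together with the initial-layer term $e^{-\b t/2}$, and the bootstrap for the integrable perturbation $\cA_{\scT}-\cA$ --- are sound and parallel the paper's Lyapunov--Gronwall computation, so there is no gap, only an unproven (but correct) algebraic lemma that a complete write-up must include.
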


As an immediate consequence of \autoref{thm:e-turnpike-EX}, we have the following corollary, which shows
that the integral and the mean-square turnpike properties also hold for Problem (SLQ)$_{\scT}$.

\begin{corollary}\label{crllry:turnpike-intgrl}
Let {\rm\ref{TP:H1}--\ref{TP:H2}} hold. Then as $T\to\i$,
$$\ba{ll}
\ns\ds{1\over T}\int_0^T\dbE[\bX_{\scT}(t)]dt\to x^*,\qq{1\over T}\dbE\Big|\int_0^T\(\bX_{\scT}(t)-x^*\)dt\Big|^2\to0, \\
\ns\ds{1\over T}\int_0^T \dbE[\bu_{\scT}(t)]dt \to u^*, \qq\, {1\over T}\dbE\Big|\int_0^T\(\bu_{\scT}(t)-u^*\)dt\Big|^2 \to0.\ea$$
\end{corollary}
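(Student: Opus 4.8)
The plan is to read both conclusions off the pointwise mean estimate \rf{hX+hu+hY<Lmu}, supplemented, for the mean-square statements, by a separate second-moment bound on the fluctuation part of the optimal trajectory. Write $\h X_{\scT}(t)=\bX_{\scT}(t)-x^*$ and $\h u_{\scT}(t)=\bu_{\scT}(t)-u^*$ as in \rf{def:hX+hu}, and set $m_X(t)=\dbE[\h X_{\scT}(t)]$, $m_u(t)=\dbE[\h u_{\scT}(t)]$, so that \autoref{thm:e-turnpike-EX} gives $|m_X(t)|+|m_u(t)|\les K[e^{-\mu t}+e^{-\mu(T-t)}]$. The two limits for the time-averaged means are then immediate: since $\frac1T\int_0^T\dbE[\bX_{\scT}(t)]dt-x^*=\frac1T\int_0^T m_X(t)dt$, integrating the exponential bound yields $\big|\frac1T\int_0^T m_X(t)dt\big|\les\frac{2K}{\mu T}\to0$, and identically for $u$.

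For the mean-square assertions I would split $\h X_{\scT}(t)=m_X(t)+\xi_X(t)$ into its deterministic mean and the centered fluctuation $\xi_X(t)=\bX_{\scT}(t)-\dbE[\bX_{\scT}(t)]$. Because $\xi_X$ has zero mean the cross term vanishes, giving
$$\dbE\Big|\int_0^T\h X_{\scT}(t)dt\Big|^2=\Big|\int_0^T m_X(t)dt\Big|^2+\dbE\Big|\int_0^T\xi_X(t)dt\Big|^2.$$
The first term is $O(1)$ by the bound above. The heart of the matter is to show that the fluctuation integral accumulates only diffusively, i.e.\ $\dbE|\int_0^T\xi_X(t)dt|^2=O(T)$; dividing by $T^2$ (the square of the averaging factor) then forces $\dbE|\frac1T\int_0^T(\bX_{\scT}(t)-x^*)dt|^2\to0$, which is the mean-square turnpike property recorded in the corollary, and the corresponding control statement follows once the analogous bound for $\xi_u$ is in hand.

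To obtain the $O(T)$ bound I would use the closed-loop representation \rf{bar u00}: inserting $\bu_{\scT}$ into the state equation in \rf{os:ProbT} and subtracting its expectation shows that $\xi_X$ solves a linear SDE $d\xi_X=[A-B\cK(P_{\scT}(t))]\xi_X\,dt+\{[C-D\cK(P_{\scT}(t))]\xi_X+g(t)\}dW$ with $\xi_X(0)=0$ and a bounded deterministic forcing $g(\cd)$ (uniformly in $T$, as $P_{\scT}$, $\cK(P_{\scT})$, $\dbE[\bX_{\scT}]$ and the offset term in \rf{bar u00} are all bounded uniformly in $T$). Writing $\xi_X(t)=\int_0^t\Phi_{\scT}(t,s)g(s)dW(s)$ through the transition matrix $\Phi_{\scT}(t,s)$ of the homogeneous closed-loop system and applying the stochastic Fubini theorem gives $\int_0^T\xi_X(t)dt=\int_0^T\big(\int_s^T\Phi_{\scT}(t,s)dt\big)g(s)dW(s)$; It\^o's isometry then reduces everything to the estimate $\sup_{0\les s\les T}\dbE|\int_s^T\Phi_{\scT}(t,s)dt|^2\les C$ with $C$ independent of $T$.

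The main obstacle --- and the reason the mean-square property is not a formal corollary of the mean estimate --- is precisely this uniform bound, since the fluctuations do not vanish pointwise (their variance stabilizes in the turnpike region) and must instead be controlled through the mean-square exponential decay of $\Phi_{\scT}$. The delicate feature is that $P_{\scT}(t)=\Si(T-t)$ satisfies $P_{\scT}(T)=0$, so near the terminal time the feedback $\cK(P_{\scT}(t))$ degenerates to the possibly unstable open loop $[A,C]$; one must therefore show that this bad behaviour is confined to a $T$-independent window near $t=T$. This is where \autoref{prop:sol-ARE} and \autoref{thm:Pi-estable} enter: the limiting closed-loop system $[A+B\cK(P),C+D\cK(P)]$ is $L^2$-stable, hence mean-square exponentially stable by \autoref{equivalence}, and the exponential convergence $P_{\scT}(t)\to P$ from \rf{P-Si:stable} confines the degeneracy to a bounded layer, yielding $\dbE|\Phi_{\scT}(t,s)|^2\les\a e^{-\b(t-s)}$ up to a $T$-independent boundary correction. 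Granting this, $\dbE|\int_s^T\Phi_{\scT}(t,s)dt|^2\les(\int_s^T\sqrt\a\,e^{-\b(t-s)/2}dt)^2\les 4\a/\b^2$, whence $\dbE|\int_0^T\xi_X\,dt|^2\les CT$; the identical computation with $\xi_u(t)=-\cK(P_{\scT}(t))\xi_X(t)$ completes the control case.
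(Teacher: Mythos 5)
Your treatment of the two averaged-mean limits is exactly the paper's (implicit) argument: the paper offers no separate proof, presenting the corollary as an immediate consequence of \autoref{thm:e-turnpike-EX}, and integrating \rf{hX+hu+hY<Lmu} gives $\big|\frac1T\int_0^T\dbE[\h X_{\scT}(t)]dt\big|\les 2K/(\mu T)\to0$, likewise for $\h u_{\scT}$. For the squared statements, however, note what the paper actually prints: $\frac1T\,\dbE\big|\int_0^T(\bX_{\scT}(t)-x^*)dt\big|^2\to0$, with $1/T$ (not $1/T^2$) outside. The only reading of this that is literally an immediate consequence of \autoref{thm:e-turnpike-EX} puts the expectation inside the modulus, $\frac1T\big|\dbE\int_0^T(\bX_{\scT}-x^*)dt\big|^2\les(2K/\mu)^2/T$, since the theorem controls means only; indeed your own (correct) observation that the centered fluctuation integral is genuinely of order $T$ in $L^2$ shows the second-moment reading with $1/T$ outside is false whenever $\si^*\neq0$ (already in scalar examples, $\frac1T\dbE|\int_0^T\check X_{\scT}dt|^2$ tends to a positive constant). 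What you prove instead is the $1/T^2$-normalized statement $\dbE\big|\frac1T\int_0^T(\bX_{\scT}-x^*)dt\big|^2\to0$, which is the natural genuine second-moment version; it is strictly stronger than anything that follows formally from the mean estimate, and you are right that it then requires the fluctuation bound $\dbE\big|\int_0^T\xi_X(t)dt\big|^2=O(T)$, which the paper never addresses.

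The gap is in how you prove that bound. The closed-loop fluctuation equation has multiplicative noise: its diffusion is $[C+D\Th_{\scT}(t)]\xi_X(t)+g(t)$ with $C+D\Th_{\scT}\neq0$ in general, so the transition matrix $\Phi_{\scT}(t,s)$ is a random matrix depending on the Brownian increments over $[s,t]$. Consequently $\xi_X(t)=\int_0^t\Phi_{\scT}(t,s)g(s)dW(s)$ is not the variation-of-constants solution (the correct formula is $\xi_X(t)=\Phi(t)\int_0^t\Phi(s)^{-1}\big[g(s)dW(s)-(C+D\Th_{\scT}(s))g(s)ds\big]$, with an extra drift correction), and your stochastic Fubini step producing $\int_0^T\big(\int_s^T\Phi_{\scT}(t,s)dt\big)g(s)dW(s)$ is illegitimate: $\int_s^T\Phi_{\scT}(t,s)dt$ is not $\sF_s$-measurable, so this is an anticipating integrand and It\^o's isometry does not apply. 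Your scheme is valid only when $C+D\cK(P)=0$ (purely additive noise), which defeats the purpose in this stochastic LQ setting. A correct route keeps your stability ingredients but replaces the representation: first show $\sup_{0\les t\les T}\dbE|\xi_X(t)|^2\les C$ uniformly in $T$ (Gronwall with the Lyapunov function $\lan P\,\cdot\,,\cdot\ran$, using \rf{PT-P:guji} and \rf{ThT-Th} to absorb the boundary layer near $t=T$); then obtain covariance decay from the conditional mean, $\dbE[\xi_X(t)\mid\sF_s]=\Psi_{\scT}(t,s)\xi_X(s)$ where $\Psi_{\scT}$ is the \emph{deterministic} propagator of $\dot y=[A+B\Th_{\scT}(t)]y$, which decays exponentially uniformly in $T$ by \autoref{crllry:e-stable} together with \rf{ThT-Th}; whence $\dbE\big|\int_0^T\xi_X dt\big|^2=\int_0^T\!\!\int_0^T\dbE\lan\xi_X(t),\xi_X(s)\ran dt\,ds\les CT$. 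With that repair the rest of your argument goes through, including the control case via $\xi_u(t)=\Th_{\scT}(t)\xi_X(t)$ (your minus sign follows the misprint in \rf{bar u00}; per \rf{def:ThT} and \rf{rep:hu} the feedback enters with $+\Th_{\scT}$).
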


In order to prove \autoref{thm:e-turnpike-EX}, let us first make some observations. For convenience, let us rewrite the optimality system \rf{os:ProbT} of Problem (SLQ)$_{\scT}$ and the characterization \rf{L-conditions} of the optimal solution to Problem (O) together here
\bel{5.3}\left\{\2n\ba{ll}
\ds d\bar X_{\scT}(t)=[A\bar X_{\scT}(t)+B\bar u_{\scT}(t)+b]dt+[C\bar X_{\scT}(t)+D\bar u_{\scT}(t)+\si]dW(t),\\
\ns\ds d\bar Y_{\scT}(t)=-\big[A^\top\bar Y_{\scT}(t)+C^\top\bar Z_{\scT}(t)+Q\bar X_{\scT}(t)+q\big]dt+\bar Z_{\scT}(t)dW(t),\\
\ns\ds\bar X_{\scT}(0)=x,\q\bar Y_{\scT}(T)=0,\\
\ns\ds B^\top\bar Y_{\scT}(t)+D^\top\bar Z_{\scT}(t)+R\bar u_{\scT}(t)= 0, \q\ae~t\in[0,T],~\as,\ea\right.\ee
and
\bel{5.4}\left\{\2n\ba{ll}
\ds Qx^*+A^\top\l^*+C^\top P(Cx^*+Du^*+\si)+q=0,\\
\ds Ru^*+B^\top\l^*+D^\top P(Cx^*+Du^*+\si)=0.\ea\right.\ee
Noting that $(x^*,u^*)\in\sV$, we have
$$Ax^*+Bu^*+b=0.$$
Also, we denote
$$\si^*=Cx^*+Du^*+\si.$$
Then, a direction calculation yields the following:
\bel{hX}\left\{\2n\ba{ll}
\ds d\h X_{\scT}(t)=\big[A\h X_{\scT}(t)+B\h u_{\scT}(t)\big]dt+ \big[C\h X_{\scT}(t)+D\h u_{\scT}(t)+\si^*]dW(t),\\
\ns\ds d\h Y_{\scT}(t)=-\big[A^\top\h Y_{\scT}(t)+C^\top\bar Z(t) +Q\h X_{\scT}(t)-C^\top P(t)\si^*\big]dt+\bar Z(t)dW,\\
\ns\ds\h X_{\scT}(0) = x-x^*,\q\h Y_{\scT}(T)=-\l^*,\\
\ns\ds B^\top\h Y_{\scT}(t)+D^\top\bar Z(t)+R\h u_{\scT}(t)-D^\top P(t)\si^*=0,\q\ae~t\in[0,T],~\as\ea\right.\ee
Comparing the above with \rf{SLQ:opti-sys}--\rf{SLQ:sta-cdtn} in \autoref{lmm:opti-sys}(ii), we see that $(\h X_{\scT}(\cd),\h u_{\scT}(\cd))$ is an optimal pair of the stochastic LQ problem
with state equation
$$\left\{\2n\ba{ll}
\ds dX(t)=\big[AX(t)+Bu(t)\big]dt+\big[CX(t)+Du(t)+\si^*\big]dW(t),\\
\ds X(0)=x-x^*,\ea\right.$$
and cost functional
$$\ba{ll}
\ds J(x;u)=\dbE\Big\{-2\lan\l^*,X(T)\ran+\int_0^T\[\lan QX(t),X(t)\ran +\lan Ru(t),u(t)\ran\\
\ns\ds\qq\qq\qq\qq\qq\qq\qq\qq-2\lan C^\top P(t)\si^*,X(t)\ran-2\lan D^\top P(t)\si^*,u(t)\ran\] dt\Big\}.\ea$$
Now applying \autoref{lmm:opti-sys}(iv), we obtain the following result immediately.

\begin{proposition} Let {\rm\ref{TP:H1}--\ref{TP:H2}} hold. Let $P_{\scT}(\cd)$ be the solution to \rf{Ric:PT} and
\bel{def:ThT}\Th_{\scT}(t)\deq\cK(P_{\scT}(t))=
-\cR(P_{\scT}(t))^{-1}\cS(P_{\scT}(t)),\ee
and let $\f_{\scT}(\cd)$ be the solution to the ODE
\bel{BODE:f}\left\{\2n\ba{ll}
\ds\dot\f_{\scT}(t)+[A+B\Th_{\scT}(t)]^\top\f_{\scT}(t)+ [C+D\Th_{\scT}(t)]^\top(P_{\scT}(t)-P)\si^*,\\
\ns\ds\f_{\scT}(T)=-\l^*.\ea\right.\ee
Then the process $\h u_{\scT}(\cd)$ defined in \rf{def:hX+hu} is given by
\bel{rep:hu}\h u_{\scT}(t)=\Th_{\scT}(t)\h X_{\scT}(t)- \cR(P_{\scT}(t))^{-1}[B^\top\f_{\scT}(t)+D^\top(P_{\scT}(t)-P)\si^*].\ee
\end{proposition}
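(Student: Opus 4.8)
The plan is to read \rf{rep:hu} off directly from the closed-loop representation in \autoref{lmm:opti-sys}(iv), applied to the auxiliary stochastic LQ problem that has just been identified as having $(\h X_{\scT}(\cd),\h u_{\scT}(\cd))$ as its optimal pair; the entire argument is then coefficient-matching plus one algebraic simplification. First I would record the data of that auxiliary problem against the general notation of \autoref{lmm:opti-sys}: its state equation has $\BA=A$, $\BB=B$, $\BC=C$, $\BD=D$, vanishing drift constant $\Bb=0$, and constant diffusion $\BBsi=\si^*$; its cost functional has $\BG=0$, $\Bg=-\l^*$, $\BQ=Q$, $\BS=0$, $\BR=R$, $\Bq=-C^\top P\si^*$, and $\Br=-D^\top P\si^*$, where $P$ is the \emph{constant} stabilizing solution of \rf{ARE3}. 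The constant $P$ enters $\Bq$ and $\Br$ precisely because the identities \rf{5.4} were used to absorb $Qx^*+A^\top\l^*+q$ and $Ru^*+B^\top\l^*$ into $-C^\top P\si^*$ and $-D^\top P\si^*$ when forming the adjoint equation for $\h Y_{\scT}$; the harmless overall factor of $2$ between the two cost normalizations does not affect the minimizer.

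Next I would note that with $\BS=0$ and $\BG=0$ the Riccati equation \rf{Ric:SLQ} for this auxiliary problem is exactly \rf{Ric:PT}, whose unique solution is $P_{\scT}(\cd)$. Hence the feedback gain of \autoref{lmm:opti-sys}(iv) reduces to $\BTh=-\cR(P_{\scT})^{-1}\cS(P_{\scT})=\cK(P_{\scT})=\Th_{\scT}$ and $\BR+\BD^\top\BP\BD=\cR(P_{\scT})$. Substituting into the representation of \autoref{lmm:opti-sys}(iv) yields
$$\h u_{\scT}(t)=\Th_{\scT}(t)\h X_{\scT}(t)-\cR(P_{\scT}(t))^{-1}\big[B^\top\BBf(t)+D^\top P_{\scT}(t)\si^*+\Br\big],$$
and since $\Br=-D^\top P\si^*$ the bracketed inhomogeneity collapses to $B^\top\BBf(t)+D^\top(P_{\scT}(t)-P)\si^*$, which is the right-hand side of \rf{rep:hu} provided $\BBf=\f_{\scT}$.

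It therefore remains to verify that the function $\BBf$ produced by the ODE \rf{SLQ:ODE-f} in \autoref{lmm:opti-sys}(iv) coincides with $\f_{\scT}$. Writing out \rf{SLQ:ODE-f} with the coefficients above, the terminal condition is $\BBf(T)=\Bg=-\l^*$, matching \rf{BODE:f}, while the forcing term is $[C+D\Th_{\scT}]^\top P_{\scT}\si^*+\Th_{\scT}^\top\Br+\Bq$. The one genuinely computational step—and the only place demanding care—is to simplify this: using $\Br=-D^\top P\si^*$ and $\Bq=-C^\top P\si^*$ it becomes $C^\top(P_{\scT}-P)\si^*+\Th_{\scT}^\top D^\top(P_{\scT}-P)\si^*=[C+D\Th_{\scT}]^\top(P_{\scT}-P)\si^*$, which is exactly the inhomogeneous term of \rf{BODE:f}. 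Thus $\BBf$ and $\f_{\scT}$ solve the same linear terminal-value ODE and, by uniqueness, agree, giving \rf{rep:hu}. I expect no real obstacle here: the collapse of the three forcing contributions into a single $[C+D\Th_{\scT}]^\top(P_{\scT}-P)\si^*$ requires only keeping the time-dependent $P_{\scT}(t)$ and the constant $P$ straight, so it is routine bookkeeping rather than a substantive difficulty.
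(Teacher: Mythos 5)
Your proposal is correct and follows exactly the paper's route: the paper identifies the auxiliary LQ problem solved by $(\h X_{\scT}(\cd),\h u_{\scT}(\cd))$ via the optimality system and then states that the proposition follows ``immediately'' from \autoref{lmm:opti-sys}(iv), which is precisely the coefficient-matching you carry out in detail (including the correct reading of the constant $P$ in $\Bq$ and $\Br$, the collapse of $D^\top P_{\scT}\si^*+\Br$ to $D^\top(P_{\scT}-P)\si^*$, and the identification of $\BBf$ with $\f_{\scT}$ by uniqueness for the linear terminal-value ODE). In effect you have written out the verification that the paper leaves implicit, and there is nothing to correct.
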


To prove \autoref{thm:e-turnpike-EX}, we also need the following lemma.

\begin{lemma} Let {\rm\ref{TP:H1}--\ref{TP:H2}} hold. The solution $\f_{\scT}(\cd)$ to the ODE \rf{BODE:f} satisfies
\bel{phi:guji}|\f_{\scT}(t)|\les Ke^{-\l(T-t)},\q\forall\,0\les t\les T,\ee
for some constants $K,\l>0$ independent of $T$.
\end{lemma}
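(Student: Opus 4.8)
The plan is to reverse time and reduce \rf{BODE:f} to an initial value problem whose coefficients and data are \emph{independent} of $T$, so that uniformity of the constants is transparent. Setting $\psi(\tau)\deq\f_{\scT}(T-\tau)$ and recalling from \autoref{prop:Si-increase} that $P_{\scT}(t)=\Si(T-t)$, hence $\Th_{\scT}(t)=\cK(\Si(T-t))$, one checks that $\psi$ solves
$$\left\{\2n\ba{ll}
\ds\dot\psi(\tau)=\big[A+B\cK(\Si(\tau))\big]^\top\psi(\tau)+\big[C+D\cK(\Si(\tau))\big]^\top(\Si(\tau)-P)\si^*,\q\tau\ges0,\\
\ds\psi(0)=-\l^*.\ea\right.$$
Crucially, neither the coefficients nor the initial datum depend on $T$, while $\f_{\scT}(t)=\psi(T-t)$ for $t\in[0,T]$. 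Hence it suffices to prove $|\psi(\tau)|\les Ke^{-\l\tau}$ for all $\tau\ges0$ with fixed $K,\l>0$, which yields \rf{phi:guji} at once.

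Next I isolate the limiting autonomous drift. Writing $\cA=A+B\cK(P)$ and $\cC=C+D\cK(P)$ as in the proof of \autoref{thm:Pi-estable}, \autoref{prop:sol-ARE} gives that $[\cA,\cC]$ is $L^2$-stable, so by \autoref{crllry:e-stable} and the transpose-invariance of the Frobenius norm, $|e^{\cA^\top\tau}|=|e^{\cA\tau}|\les Ke^{-\l\tau}$, where $\l$ is the constant appearing in \rf{P-Si:stable}. Decomposing $A+B\cK(\Si(\tau))=\cA+E(\tau)$ with $E(\tau)=B[\cK(\Si(\tau))-\cK(P)]$, I would use that $\cK(M)=-(R+D^\top MD)^{-1}(B^\top M+D^\top MC)$ is Lipschitz on the compact set $\{M\in\dbS^n\bigm|0\les M\les P\}$ (on which $R+D^\top MD\ges R>0$ stays invertible with bounded inverse) together with \autoref{thm:Pi-estable} to obtain $|E(\tau)|\les Ke^{-2\l\tau}$. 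The same boundedness of $\cK(\Si(\cd))$ and \rf{P-Si:stable} bound the forcing term $g(\tau)\deq[C+D\cK(\Si(\tau))]^\top(\Si(\tau)-P)\si^*$ by $|g(\tau)|\les Ke^{-2\l\tau}$, since $\si^*$ and $\l^*$ are $T$-independent data of Problem (O).

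Finally I would rewrite $\dot\psi=\cA^\top\psi+E(\tau)^\top\psi+g(\tau)$ and apply variation of constants,
$$\psi(\tau)=e^{\cA^\top\tau}\psi(0)+\int_0^\tau e^{\cA^\top(\tau-s)}\big[E(s)^\top\psi(s)+g(s)\big]ds.$$
Putting $h(\tau)\deq e^{\l\tau}|\psi(\tau)|$, the estimates above reduce this to $h(\tau)\les K+K\int_0^\tau e^{-2\l s}h(s)\,ds$; since $\int_0^\i e^{-2\l s}ds<\i$, Gronwall's inequality bounds $h$ by a $T$-independent constant, i.e. $|\psi(\tau)|\les Ke^{-\l\tau}$. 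The main obstacle is the genuine time-dependence of the drift $A+B\cK(\Si(\tau))$: a constant-coefficient semigroup estimate is not directly available, and one must instead absorb the gap $E(\tau)$ as an exponentially small perturbation in the Gronwall step—which succeeds precisely because \autoref{thm:Pi-estable} forces $E$ and $g$ to decay at rate $2\l$, fast enough to be integrated against the stable semigroup $e^{\cA^\top\tau}$ decaying at rate $\l$.
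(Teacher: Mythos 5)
Your proof is correct and is essentially the paper's own argument: both decompose the drift around the limiting closed-loop matrix $\cA^\top=[A+B\cK(P)]^\top$, bound the perturbation $B[\Th_{\scT}-\Th]$ (your $E$) and the forcing term by $Ke^{-2\l(T-\cdot)}$ via \autoref{thm:Pi-estable}, and close with variation of constants plus Gronwall applied to the weighted quantity $e^{\l(T-t)}|\f_{\scT}(t)|$. Your time reversal $\psi(\tau)=\f_{\scT}(T-\tau)$, which packages the problem as a single $T$-independent initial value problem (a nice way of making the uniformity in $T$ automatic), and your Lipschitz bound for $\cK$ on $\{0\les M\les P\}$ in place of the paper's explicit resolvent identity for $\Th_{\scT}-\Th$, are only presentational variants of the same estimate.
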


\begin{proof} For notational simplicity, we let
\bel{notation:ThcAcC}\Th\deq\cK(P)=-(R+D^\top\1n PD)^{-1}(B^\top\1n P+D^\top\1n PC),\q\cA\deq A+B\Th,\q\cC \deq C+D\Th\ee
and write \rf{BODE:f} as
\bel{BODE:f*}\left\{\2n\ba{ll}
\ds\dot\f_{\scT}(t)=-\cA^\top\f_{\scT}(t)-[B(\Th_{\scT}(t)
-\Th)]^\top\f_{\scT}(t)-[C+D\Th_{\scT}(t)]^\top(P_{\scT}(t)-P)\si^*, \\
\ds\f_{\scT}(T)=-\l^*.\ea\right.\ee
By the variation of constants formula,
\bel{fT=}\f_{\scT}(t)=e^{\cA^\top(T-t)}\[-\l^*+\int_t^T e^{\cA^\top(s-T)}\rho(s)ds\],\ee
where
$$ \rho(s) = [B(\Th_{\scT}(s)-\Th)]^\top\f_{\scT}(s) + [C+D\Th_{\scT}(s)]^\top(P_{\scT}(s)-P)\si^*. $$
Recall from the proof of \autoref{thm:Pi-estable} that there exist constants $K,\l>0$,
independent of $T$, such that \rf{P-Si:stable} and \rf{Psi:stable} hold,
and note that $\Si(T-t)=P_{\scT}(t)$ (\autoref{prop:Si-increase}). We have
\begin{align}\label{PT-P:guji}
 \big|e^{\cA^\top t}\big| \les Ke^{-\l t}, \q |P_{\scT}(t)-P| \les Ke^{-{2\l}(T-t)}, \q\forall\, 0\les t\les T<\i,
\end{align}
where and hereafter, $K$ represents a generic constant (independent of $T$)
which can be different from line to line,
but $\l$ is the fixed constant in \rf{P-Si:stable} and \rf{Psi:stable}. Observe that
$$\ba{ll}
\ds\Th_{\scT}(s)-\Th=\cR(P)^{-1}\cS(P)-\cR(P_{\scT}(s))^{-1}
\cS(P_{\scT}(s))\\
\ns\ds\qq\qq\q=\cR(P)^{-1}\cS(P-P_{\scT}(s))+\[\cR(P)^{-1}
-\cR(P_{\scT}(s))^{-1}\]\cS(P_{\scT}(s))\\
\ns\ds\qq\qq\q=\cR(P)^{-1}\cS(P-P_{\scT}(s))+\cR(P)^{-1}D^\top [P_{\scT}(s)-P]D\cR(P_{\scT}(s))^{-1}\cS(P_{\scT}(s)).\ea$$
Since $P_{\scT}(\cd)$, and hence $\Th_{\scT}(\cd)$, is bounded uniformly in $T$, we have
\bel{ThT-Th}|\Th_{\scT}(s)-\Th|\les Ke^{-2\l(T-s)},\q\forall\, 0\les s\les T<\i.\ee
It follows that
$$|\rho(s)|\les Ke^{-2\l(T-s)}\big[|\f_{\scT}(s)|+1\big].$$
If we let $h(t)=e^{\l(T-t)}|\f_{\scT}(t)|$, then by \rf{fT=},
$$\ba{ll}
\ds h(t)\les K\Big[|\l^*|+\int_t^TKe^{-\l(s-T)}|\rho(s)|ds\Big]\\
\ns\ds\les K+K\int_t^T\big[e^{-\l(T-s)}|\f_{\scT}(s)|+e^{-\l(T-s)}\big]ds\\
\ds=K+K\int_t^T\big[e^{-2\l(T-s)}h(s)+e^{-\l(T-s)}\big]ds.\ea$$
Applying Gronwall's inequality we obtain that for some constant $K>0$, independent of $T>0$,
$$h(t)\les K,\q\forall t\in[0,T].$$
The desired result then follows.
\end{proof}

\begin{proof}[Proof of \autoref{thm:e-turnpike-EX}]
For notational simplicity, we let
$$\h v_{\scT}(t)\deq -\cR(P_{\scT}(t))^{-1}[B^\top\f_{\scT}(t)+D^\top(P_{\scT}(t)-P)\si^*].$$
Substituting \rf{rep:hu} into the state equation for $\h X_{\scT}(\cd)$ yields
$$\left\{\2n\ba{ll}
\ds d\h X_{\scT}(t)=\big\{[A+B\Th_{\scT}(t)]\h X_{\scT}(t)+B\h v_{\scT}(t)\big\}dt \\
\ns\ds\qq\qq\qq+\big\{[C+D\Th_{\scT}(t)]\h X_{\scT}(t)+D\h v_{\scT}(t)+\si^*\big\}dW(t),\q t\in[0,T],\\
\ns\ds\h X_{\scT}(0)=x-x^*.\ea\right.$$
Using the notation \rf{notation:ThcAcC}, we can rewrite the above as
\bel{hX:cloop}\left\{\2n\ba{ll}
\ds d\h X_{\scT}(t)=[\cA\h X_{\scT}(t)+\xi(t)]dt+[\cC\h X_{\scT}(t) +\eta(t)]dW(t),\q t\in[0,T],\\
\ns\ds\h X_{\scT}(0)=x-x^*\equiv\h x,\ea\right.\ee
where
$$\xi(t)=B[\Th_{\scT}(t)-\Th]\h X_{\scT}(t)+B\h v_{\scT}(t),\q \eta(t)=D[\Th_{\scT}(t)-\Th]\h X_{\scT}(t)+D\h v_{\scT}(t)+\si^*.$$
Taking expectations in \rf{hX:cloop}, we get
\bel{ODE:EhX}\left\{\2n\ba{ll}
\ns\ds d\dbE[\h X_{\scT}(t)]=\big\{\cA\dbE[\h X_{\scT}(t)]+B[\Th_{\scT}(t)-\Th]\dbE[\h X_{\scT}(t)] +B\h v_{\scT}(t)\big\}dt,\q t\in[0,T],\\
\ns\ds\dbE[\h X_{\scT}(0)]=\h x.\ea\right.\ee
Noting
$$P\cA+\cA^\top P+\cC^\top P\cC+\Th^\top R\Th+Q=0,$$
we obtain
$$\ba{ll}
\ds\lan P\dbE[\h X_{\scT}(t)],\dbE[\h X_{\scT}(t)]\ran-\lan P\h x,\h x\ran\1n=\2n\int_0^t\2n\big\{\1n-\lan(\cC^\top P\cC+\Th^\top R\Th+Q)\dbE[\h X_{\scT}(s)],\dbE[\h X_{\scT}(s)\ran\\
\ns\ds\qq\qq\qq\qq\qq\qq\qq+2\lan P\dbE[\h X_{\scT}(s)],B[\Th_{\scT}(s)-\Th]\dbE[\h X_{\scT}(s)] +B\h v_{\scT}(s)\ran\big\}ds.\ea$$
Further, since $P>0$ and $\cC^\top P\cC+\Th^\top R\Th+Q>0$, we have from the above that
$$\ba{ll}
\ds|\dbE[\h X_{\scT}(t)]|^2\les\a_1+\dbE\int_0^t\[-\a_2|\dbE[\h X_{\scT}(s)]|^2+\a_1|\Th_{\scT}(s)-\Th|\,|\dbE[\h X_{\scT}(s)]|^2\\
\ns\ds\qq\qq\qq+2\a_1|\dbE[\h X_{\scT}(s)]|\cd|\h v_{\scT}(s)|\]ds,\ea$$
for some constants $\a_1,\a_2>0$ independent of $T$. Using the Cauchy-Schwarz inequality we can obtain that with two possibly different constants $\a_1,\a_2>0$,
$$|\dbE[\h X_{\scT}(t)]|^2
\1n\les\1n\a_1\1n+\1n\dbE\2n\int_0^t\2n\big[-\a_2|\dbE[\h X_{\scT}(s)]|^2+\a_1|\Th_{\scT}(s)-\Th|\,|\dbE[\h X_{\scT}(s)]|^2+\a_1|\hv_{\scT}(s)|^2\]ds.$$
Recalling \rf{phi:guji} and \rf{ThT-Th}, we see that with another two possibly different constants
$\a_1,\a_2>0$ independent of $T$,
$$|\dbE[\h X_{\scT}(t)]|^2\les\a_1+\dbE\int_0^t\[\big(\a_1e^{-2\l(T-s)}-\a_2\big)|
\dbE[\hX_{\scT}(s)]|^2 + \a_1e^{-2\l(T-s)}\]ds.$$
For any $0\les s\les t\les T$, we have
$$\ba{ll}
\ds\phi(s,t)\deq \exp\Big\{\int_s^t\big(\a_1e^{-2\l(T-r)}-\a_2\big)dr\Big\}\\
\ns\ds\qq\q=\exp\lt\{{\a_1\over2\l}\big[e^{-2\l(T-t)}-e^{-2\l(T-s)}\big]
-\a_2(t-s)\rt\}\les e^{\a_1\over2\l}e^{-\a_2(t-s)},\ea$$
and thus
$$\int_0^t\phi(s,t)e^{-2\l(T-s)}ds\les e^{\a_1\over2\l}\int_0^te^{-2\l(T-s)}ds
\les {1\over2\l}e^{\a_1\over2\l} e^{-2\l(T-t)}.$$
Hence, by Gronwall's inequality,
\bel{hX<Lmu}\ba{ll}
\ns\ds|\dbE[\h X_{\scT}(t)]|^2\les\a_1\phi(0,t)+ \a_1\int_0^t\phi(s,t)e^{-2\l(T-s)}ds\\
\ns\ds\qq\qq\q\les\a_1e^{\a_1\over2\l}e^{-\a_2t}\1n+\1n{\a_1\over2\l}
e^{\a_1\over2\l} e^{-2\l(T-t)}\les L\big(e^{-\mu t}+ e^{-\mu(T-t)}\big),\q\forall t\in[0,T],\ea\ee
where $L=\lt(\a_1+{\a_1\over2\l}\rt)e^{\a_1\over2\l}$ and $\mu=\a_2\wedge2\l$.
Now using the relation \rf{rep:hu} and noting that
$$|\hv_{\scT}(t)| \les Ke^{-\l(T-t)}, $$
we can show that
\bel{hu<Lmu}
|\dbE[\h u_{\scT}(t)]|^2
\les 2|\Th_{\scT}(t)|^2\,|\dbE[\h X_{\scT}(t)]|^2+ 2|\h v_{\scT}(t)|^2\les L \[e^{-\mu t} + e^{-\mu(T-t)}\], \q\forall t\in[0,T],\ee
for a possibly different constant $L$ that is independent of $T$.
Finally, we can verify that the following relation holds:
$$\h Y_{\scT}(t)=P_{\scT}(t)\h X_{\scT}(t)+\f_{\scT}(t),
\q \bar Z_{\scT}(t) = P_{\scT}(t)[C\h X_{\scT}(t) + D\h u_{\scT}(t)+\si^*],$$
from which it follows that (with a possibly different constant $L>0$)
\bel{hY<Lmu}
|\dbE[\h Y_{\scT}(t)]|^2\les L\[e^{-\mu t}+e^{-\mu(T-t)}\],\q\forall t\in[0,T].\ee
Combining \rf{hX<Lmu}--\rf{hY<Lmu}, we get the desired \rf{hX+hu+hY<Lmu}.
\end{proof}

We conclude this section by showing that the value function $V_{\scT}(x)$
of Problem (SLQ)$_{\scT}$ converges in the sense of time-average to the
optimal value $V$ of the associated static optimization problem.

\begin{theorem}
Let {\rm\ref{TP:H1}--\ref{TP:H2}} hold. Then
\begin{align}\label{eqn:VTgotoV}
{1\over T}V_{\scT}(x)\to V  \q\text{as } T\to\i.
\end{align}
\end{theorem}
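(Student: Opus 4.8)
The plan is to read the limit off the closed-form value function of the reduced problem. Denote by $\psi_{\scT}(\cd)$ the solution of the terminal-value ODE in the value-function formula of \autoref{Sec:Convergence} (terminal condition $\psi_{\scT}(T)=0$ and forcing $[C+D\Th_{\scT}(t)]^\top P_{\scT}(t)\si+P_{\scT}(t)b+q$, with $\Th_{\scT}=\cK(P_{\scT})$), so that
$$ V_{\scT}(x)=\tfrac12\lan P_{\scT}(0)x,x\ran+\lan\psi_{\scT}(0),x\ran+\tfrac12\int_0^T h_{\scT}(t)\,dt, $$
where $h_{\scT}(t)=\lan P_{\scT}(t)\si,\si\ran+2\lan\psi_{\scT}(t),b\ran-\big|\cR(P_{\scT}(t))^{-1/2}[B^\top\psi_{\scT}(t)+D^\top P_{\scT}(t)\si]\big|^2$. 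Since $P_{\scT}(0)=\Si(T)$ and $\psi_{\scT}(0)$ are bounded uniformly in $T$ (the latter because $P_{\scT}$ and $\Th_{\scT}$ are), dividing by $T$ makes the first two terms vanish in the limit, and the problem reduces to computing $\lim_{T\to\i}\tfrac1{2T}\int_0^T h_{\scT}(t)\,dt$.

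First I would identify the bulk limit of the integrand. With $\Th=\cK(P)$, $\cA=A+B\Th$, $\cC=C+D\Th$ as in \rf{notation:ThcAcC}, let $\bar\psi$ be the unique solution of $\cA^\top\bar\psi+\cC^\top P\si+Pb+q=0$, which exists because $\cA$ is stable (\autoref{prop:sol-ARE} and \autoref{crllry:e-stable}). Using $|P_{\scT}(t)-P|\les Ke^{-2\l(T-t)}$ from \autoref{thm:Pi-estable} and the resulting bound $|\Th_{\scT}(t)-\Th|\les Ke^{-2\l(T-t)}$ of \rf{ThT-Th}, the variation-of-constants plus Gronwall argument that gave \rf{phi:guji} applies verbatim to $\psi_{\scT}-\bar\psi$ and yields $|\psi_{\scT}(t)-\bar\psi|\les Ke^{-\l(T-t)}$. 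As $h_{\scT}$ is uniformly bounded and Lipschitz in $(P_{\scT},\psi_{\scT})$ on the relevant bounded set, $|h_{\scT}(t)-h_\infty|\les Ke^{-\l(T-t)}$ with $h_\infty=\lan P\si,\si\ran+2\lan\bar\psi,b\ran-\big|\cR(P)^{-1/2}[B^\top\bar\psi+D^\top P\si]\big|^2$; hence $\tfrac1T\int_0^T|h_{\scT}-h_\infty|\,dt\les \tfrac{K}{\l T}\to0$ and $\tfrac1T V_{\scT}(x)\to\tfrac12 h_\infty$.

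The crux is then the purely algebraic identity $\tfrac12 h_\infty=V$, where $V=\tfrac12 F(x^*,u^*)$ with the normalization inherited from the $\tfrac12$ in the cost. I expect the most economical derivation to use the affine costate relation $\h Y_{\scT}=P_{\scT}\h X_{\scT}+\f_{\scT}$ recorded in the proof of \autoref{thm:e-turnpike-EX}: matching it against the decomposition $\bar Y_{\scT}=P_{\scT}\bar X_{\scT}+\psi_{\scT}$ gives $\psi_{\scT}=\l^*-P_{\scT}x^*+\f_{\scT}$, and since $\f_{\scT}(t)\to0$ in the bulk by \rf{phi:guji}, $\bar\psi=\l^*-Px^*$. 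Substituting this into $h_\infty$ and eliminating the quadratic term through \rf{ARE3} and the optimality system \rf{L-conditions} for $(x^*,u^*,\l^*)$ should collapse $h_\infty$ to $\lan Qx^*,x^*\ran+\lan Ru^*,u^*\ran+2\lan q,x^*\ran+\lan P\si^*,\si^*\ran=F(x^*,u^*)$, where $\si^*=Cx^*+Du^*+\si$. This bookkeeping is finite but unilluminating, and it is the step I expect to be the main obstacle.

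For a transparent cross-check (and the intuition behind the identity) I would read $\tfrac12 h_\infty$ as the stationary running cost of the limiting closed-loop dynamics. In the bulk, \rf{hX:cloop} degenerates, as $\Th_{\scT}\to\Th$ and $\h v_{\scT}\to0$, to $d\wt X=\cA\wt X\,dt+[\cC\wt X+\si^*]\,dW$, whose stationary covariance $\Pi^*$ satisfies the Lyapunov equation $\cA\Pi^*+\Pi^*\cA^\top+\cC\Pi^*\cC^\top+\si^*(\si^*)^\top=0$. Pairing this with $P$ and invoking the ARE in the form $P\cA+\cA^\top P+\cC^\top P\cC+Q+\Th^\top R\Th=0$ used in \autoref{prop:sol-ARE}, the trace cyclic property gives $\tr\big((Q+\Th^\top R\Th)\Pi^*\big)=\lan P\si^*,\si^*\ran$. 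Combined with the mean estimates of \autoref{thm:e-turnpike-EX}, which force $\tfrac1T\int_0^T\dbE\bar X_{\scT}$ and $\tfrac1T\int_0^T\dbE\bar u_{\scT}$ toward $x^*,u^*$, this recovers $\tfrac1T V_{\scT}(x)\to\tfrac12[\lan Qx^*,x^*\ran+\lan Ru^*,u^*\ran+2\lan q,x^*\ran+\lan P\si^*,\si^*\ran]=V$.
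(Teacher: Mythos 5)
Your argument is correct, but it takes a genuinely different route from the paper's. The paper never touches the closed-form value function: it splits the optimal pair into mean and fluctuation, $\check X_{\scT}=\bar X_{\scT}-\dbE\bar X_{\scT}$, $\check u_{\scT}=\bar u_{\scT}-\dbE\bar u_{\scT}$, applies the integral turnpike property (\autoref{crllry:turnpike-intgrl}) to the mean part to produce $f(x^*)+g(u^*)$, and handles the fluctuation part by an It\^o/completion-of-squares computation with $P_{\scT}(\cd)$, the key observation being that $\check u_{\scT}-\Th_{\scT}\check X_{\scT}=0$ (by \rf{rep:hu}, $\h u_{\scT}-\Th_{\scT}\h X_{\scT}$ is deterministic, so its centered version vanishes); the variance cost then collapses to ${1\over T}\dbE\int_0^T\lan P_{\scT}\bar\si,\bar\si\ran dt\to\lan P\si^*,\si^*\ran$, and the limit ${1\over2}F(x^*,u^*)$ emerges structurally with no algebra needed to identify it. Your route instead reduces the whole theorem to deterministic ODE estimates plus linear algebra: exponential convergence of $P_{\scT}$ (\autoref{thm:Pi-estable}), a Gronwall lemma for $\psi_{\scT}\to\bar\psi$ in the bulk, and the algebraic identity ${1\over2}h_\infty=V$. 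What your approach buys: it bypasses the probabilistic decomposition entirely, it needs \autoref{thm:e-turnpike-EX} only through the costate matching (and even that is avoidable, since one can verify directly that $\l^*-Px^*$ solves $\cA^\top\bar\psi+\cC^\top P\si+Pb+q=0$ using \rf{L-conditions}, the constraint $Ax^*+Bu^*+b=0$ and the ARE), and it yields a rate, $|{1\over T}V_{\scT}(x)-V|\les K/T$, which the paper's argument does not state. What the paper's approach buys: transparency about why the diffusion enters the static cost precisely as $\lan P\si^*,\si^*\ran$, and no terminal bookkeeping.

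Two points to close out your write-up. First, the algebra you deferred does close, and with the ingredients you named: from the second equation of \rf{L-conditions}, $B^\top\bar\psi+D^\top P\si=-\big[\cR(P)u^*+\cS(P)x^*\big]$, so the squared term in $h_\infty$ equals $\lan\cR(P)u^*,u^*\ran+2\lan\cS(P)x^*,u^*\ran+\lan\cS(P)^\top\cR(P)^{-1}\cS(P)x^*,x^*\ran$; replacing $\cS(P)^\top\cR(P)^{-1}\cS(P)$ by $PA+A^\top P+C^\top PC+Q$ via the ARE, and expanding $2\lan\bar\psi,b\ran$ with $b=-(Ax^*+Bu^*)$ and \rf{L-conditions}, everything cancels except $F(x^*,u^*)$. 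Second, there is a small circularity to patch in your Gronwall step: to make the forcing term $[B(\Th_{\scT}-\Th)]^\top\psi_{\scT}$ exponentially small you need $\sup_T\sup_{t\in[0,T]}|\psi_{\scT}(t)|<\i$, which does not follow merely from boundedness of $P_{\scT}$ and $\Th_{\scT}$ as you assert (the forcing in the $\psi_{\scT}$ equation is $O(1)$, not decaying); it follows from first running variation of constants and Gronwall on the $\psi_{\scT}$ equation itself, using the stability of $\cA$ and the boundedness of the forcing, exactly in the style of the proof of \rf{phi:guji}, and only then comparing $\psi_{\scT}$ with $\bar\psi$.
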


\begin{proof}
We observe first that
$$ V_{\scT}(x) = J_{\scT}(x;\bar u_{\scT}(\cd)) $$
can be written as follows:
\bel{cost:fenjie}\ba{ll}
\ds J_{\scT}(x;\bar u_{\scT}(\cd))={1\over2}\int_0^T\[f(\dbE\bar X_{\scT}(t))+g(\dbE\bar u_{\scT}(t))\]dt\\
\ns\ds\qq\qq\qq\qq+{1\over2}\dbE\int_0^T \[\lan Q\check{X}_{\scT}(t),\check{X}_{\scT}(t)\ran + \lan R\check{u}_{\scT}(t),\check{u}_{\scT}(t)\ran \] dt,\ea\ee
where $f(x)\deq\lan Qx,x\ran+2\lan q,x\ran$, $g(u)\deq\lan Ru,u\ran$, and
$$\check{X}_{\scT}(\cd)\deq \bar X_{\scT}(\cd)-\dbE\bar X_{\scT}(\cd),
\q\check{u}_{\scT}(\cd)\deq \bar u_{\scT}(\cd)-\dbE\bar u_{\scT}(\cd). $$
By \autoref{crllry:turnpike-intgrl},
\begin{align}\label{VT:I}
{1\over T}\int_0^T \[f(\dbE\bar X_{\scT}(t)) + g(\dbE\bar u_{\scT}(t))\] dt \to f(x^*)+g(u^*).
\end{align}
On the other hand, $\check{X}_{\scT}(\cd)$ evolves according to the following SDE:
$$\left\{\ba{ll}
\ds d\check X_{\scT}(t)=[A\check X_{\scT}(t)+B\check u_{\scT}(t)]dt+[C\check X_{\scT}(t)+D\check u_{\scT}(t)+\bar\si(t)]dW,\\
\ns\ds\check X_{\scT}(0)=0,\ea\right.$$
where
$$\bar\si(t)\deq C\dbE\bar X_{\scT}(t)+D\dbE\bar u_{\scT}(t)+\si.$$
Let $P_{\scT}(\cd)$ be the solution to the Riccati equation \rf{Ric:PT}. Then
\bel{PcheckX}\ba{ll}
\ns\ds0=\dbE\lan P_{\scT}(T)\check{X}_{\scT}(T),\check{X}_{\scT}(T)\ran-\dbE\lan P_{\scT}(0)\check{X}_{\scT}(0),\check{X}_{\scT}(0)\ran\\
\ns\ds\q= \dbE\int_0^T \Big\{\lan\dot P_{\scT}(t)\check{X}_{\scT}(t),\check{X}_{\scT}(t)\ran
   +2\lan P_{\scT}(t)\check{X}_{\scT}(t),A\check{X}_{\scT}(t) + B\check{u}_{\scT}(t)\ran\\
\ns\ds\qq\qq+\lan P_{\scT}(t)[C\check{X}_{\scT}(t) + D\check{u}_{\scT}(t) + \bar\si(t)],                C\check{X}_{\scT}(t) + D\check{u}_{\scT}(t) + \bar\si(t)\ran\Big\} dt.\ea\ee
Noting the fact
$$ \dbE\check{X}_{\scT}(t) =0, \q \dbE\check{u}_{\scT}(t)=0, \q\forall t\in[0,T],$$
and using \rf{Ric:PT}, we derive from \rf{PcheckX} that
\begin{align}\label{PcheckX*}
0 &= \dbE\int_0^T \Big\{\blan[\cS(P_{\scT}(t))^\top \cR(P_{\scT}(t))^{-1}\cS(P_{\scT}(t))-Q]
     \check{X}_{\scT}(t),\check{X}_{\scT}(t)\bran \nn\\
&\hp{=\ } +2\lan\check{u}_{\scT}(t),\cS(P_{\scT}(t))\check{X}_{\scT}(t)\ran
          +\lan D^\top P_{\scT}(t)D\check{u}_{\scT}(t),\check{u}_{\scT}(t)\ran
          +\lan P_{\scT}(t)\bar\si(t),\bar\si(t)\ran\Big\} dt.
\end{align}
%
It follows that
$$\ba{ll}
\ns\ds\dbE\int_0^T \[\lan Q\check{X}_{\scT}(t),\check{X}_{\scT}(t)\ran + \lan R\check{u}_{\scT}(t),\check{u}_{\scT}(t)\ran \] dt\\
\ns\ds\q= \dbE\int_0^T \Big\{\blan[\cS(P_{\scT})^\top \cR(P_{\scT})^{-1}\cS(P_{\scT})]\check{X}_{\scT},\check{X}_{\scT}\bran
     +2\lan\check{u}_{\scT},\cS(P_{\scT})\check{X}_{\scT}\ran\\
\ns\ds\qq\qq+\lan\cR(P_{\scT})\check{u}_{\scT},\check{u}_{\scT}\ran +\lan P_{\scT}\bar\si,\bar\si\ran\Big\} dt\\
\ns\ds\q= \dbE\int_0^T \Big\{\lan\cR(P_{\scT})[\check{u}_{\scT}-\Th_{\scT}\check{X}_{\scT}],\check{u}_{\scT}-\Th_{\scT}\check{X}_{\scT}\ran
     +\lan P_{\scT}\bar\si,\bar\si\ran\Big\} dt.\ea$$
By \rf{rep:hu}, we have
$$\ba{ll}
\ds\check{u}_{\scT}(t) - \Th_{\scT}(t)\check{X}_{\scT}(t)
= \(\hu_{\scT}(t)-\dbE[\hu_{\scT}(t)]\) - \Th_{\scT}(t)\(\hX_{\scT}(t)-\dbE[\hX_{\scT}(t)]\) \\
\ns\ds\qq\qq\qq\qq\q= \(\hu_{\scT}(t)-\Th_{\scT}(t)\hX_{\scT}(t)\) - \dbE\(\hu_{\scT}(t)-\Th_{\scT}(t)\hX_{\scT}(t)\)= 0,\ea$$
and by \rf{P-Si:stable} and \rf{hX+hu+hY<Lmu}, we have
$${1\over T}\int_0^T \lan P_{\scT}(t)\bar\si(t),\bar\si(t)\ran dt \to \lan P\si^*,\si^*\ran, \q\text{as } T\to\i.$$
Therefore, as $T\to\i$
\bel{VT:II-end}{1\over T}\,\dbE\int_0^T \[\lan Q\check{X}_{\scT}(t),\check{X}_{\scT}(t)\ran + \lan R\check{u}_{\scT}(t),\check{u}_{\scT}(t)\ran \] dt
\to \lan P\si^*,\si^*\ran.\ee
Combining \rf{VT:I} and \rf{VT:II-end} yields  \rf{eqn:VTgotoV}.
\end{proof}

\ms

We point out that for general situation, namely, the state equation \rf{state} and the cost functional \rf{TP:cost}, we may carry out the procedure (with more complicated notations) to get the same results, or transform back from the results for reduced problem. The general condition ensuring the results are the stabilizability of the sysmte $[A,C;B,D]$ and the stronger standard condition \rf{R>0} for the weighting functions of the cost functional.

\section{Concluding Remarks}\label{Sec:Concludng}

For linear quadratic stochastic optimal control problems in finite horizons, we have established the turnpike property under natural conditions of stabiizability of the controlled linear SDE, and the strong standard condition of the quadratic cost functional. The crucial contribution of the current paper is to find the correct form of the corresponding static optimization problem in which the diffusion part of the state equation should be getting into the cost functional, rather than taking it to be as an additional equality constraint. Such an idea should have big impact on the study of turnpike type problems for general stochastic optimal control problems. We will report some further results along this line in our future publications.

\end{document}